\newtheorem{theorem}{Theorem}[section]
\newtheorem{lemma}[theorem]{Lemma}
\newtheorem{corollary}[theorem]{Corollary}
\newtheorem{claim}{Claim}
\newcommand{\claimqed}{\hspace*{\fill} $\triangle$  \ifmmode \else
    \par\addvspace\topsep\fi}
\newenvironment {claimproof}{\par\addvspace\topsep\noindent{\it Proof.}
    \ignorespaces \rm}{\claimqed}
\def\R{\mathbb{R}}
\begin{document}

\title{On the Fiedler value of large planar graphs}

\author{
      \normalsize Lali Barri\`ere\\
      \small\sf Departament de Matem\`atica Aplicada IV,\\
      \small\sf Universitat Polit\`ecnica de Catalunya\\[-1mm]
      \small\sf {\tt lali@ma4.upc.edu}
\and
      \normalsize Clemens Huemer\\
      \small\sf Departament de Matem\`atica Aplicada IV,\\
      \small\sf Universitat Polit\`ecnica de Catalunya\\[-1mm]
      \small\sf {\tt clemens@ma4.upc.edu}
\and
      \normalsize Dieter Mitsche\\
      \small\sf Departament de Matem\`atica Aplicada IV,\\
      \small\sf Universitat Polit\`ecnica de Catalunya\\[-1mm]
      \small\sf {\tt dieter.mitsche@ma4.upc.edu}
\and
      \normalsize David Orden\\
      \small\sf Departamento de F\'{\i}sica y Matem\'aticas\\
      \small\sf Universidad de Alcal\'a \\[-1mm]
      \small\sf {\tt david.orden@uah.es}
}

\date{}
\maketitle

\begin{abstract}
The Fiedler value $\lambda_2$, also known as algebraic connectivity, is the second smallest Laplacian eigenvalue of a graph.
We study the maximum Fiedler value among all planar graphs $G$ with $n$ vertices, denoted by $\lambda_{2\max}$,
and we show the bounds $2+\Theta(\frac{1}{n^2}) \leq \lambda_{2\max} \leq 2+O(\frac{1}{n})$.
We also provide bounds on the maximum Fiedler value for the following classes of planar graphs:
Bipartite planar graphs, bipartite planar graphs with minimum vertex degree~3, and outerplanar graphs.
Furthermore, we derive almost tight bounds on $\lambda_{2\max}$ for two more classes of graphs, those of bounded genus and $K_h$-minor-free graphs.
\end{abstract}

\section{Introduction}

Let $G=(V,E)$ be a simple graph with vertex set $V=\{v_1,\ldots,v_n\}$.
The \emph{Laplacian matrix} $L(G)$ is the $n \times n$ matrix with entries
$$
 \ell_{i,j} = \left\{ \begin{array}{cl}
         deg(v_i) & \mbox{if $i=j$},\\
        -1 & \mbox{if $i \neq j$ and $v_iv_j \in E$},\\
        0 & \mbox{if $i \neq j$ and $v_iv_j \notin E$}.\end{array} \right.
$$
Let the eigenvalues of $L(G)$ be $0=\lambda_1 \leq \lambda_2 \leq \lambda_3 \leq \cdots \leq \lambda_n$.
The second smallest eigenvalue $\lambda_2$, or $\lambda_2(G)$, is called the {\emph{Fiedler value}} or  {\emph{algebraic connectivity}}~\cite{fiedler} of~$G$.
It is related to a number of graph invariants and it plays a special role in many problems in Physics and Chemistry, where spectral techniques can be applied~\cite{abreu,fiedler,mohar91,mohar}.
Another classical problem for which the techniques introduced in~\cite{fiedler} have revealed to be very successful is graph partitioning~\cite{elsner}. The Fiedler value has also been proved to be related to the size of separators, as well as to the quality of geometric embeddings of the graph~\cite{ST96,ST07}.

A number of results have been obtained for $\lambda_2$, for which we refer the interested reader to the surveys~\cite{abreu,mohar}. As for recent works, the authors of~\cite{BLR10} make use of flows and the choice of an appropriate metric for proving bounds on $\lambda_2$. Similar techniques are used in~\cite{KLPT09} to study higher eigenvalues of graphs of bounded genus. Another work devoted to upper bounds on the algebraic connectivity is~\cite{freitas}.

The main goal of the present work is to study the maximum of $\lambda_2(G)$ over all planar graphs $G$ with $n$ vertices, which will be denoted as $\lambda_{2\max}.$ The bound $\lambda_{2\max}<6$ follows easily, since for any graph $G=(V,E)$ with $n$ vertices $\lambda_2(G)\leq \frac{2|E|}{n-1}$~\cite{fiedler} and if $G$ is planar then $|E| \leq 3n-6$. Molitierno~\cite{molitierno} proved that $\lambda_{2\max}\leq 4$, with exactly two planar graphs attaining this bound: the complete graph with four vertices, $K_4$, and the octahedral graph $K_{2,2,2}$. It is known that $\lambda_2$ is much smaller for some planar graph classes.
In particular, trees have $\lambda_2\le 1$, with the bound achieved only for $K_{1,n-1}$~\cite{merris}.
Moreover, Spielman and Teng~\cite{ST07} proved that for the class of bounded-degree planar graphs with $n$ vertices, $\lambda_{2\max}$ tends towards zero when $n$ tends towards infinity.

We also study $\lambda_{2\max}$ for bipartite planar graphs, outerplanar graphs, graphs of bounded genus and $K_h$-minor-free graphs. Table~\ref{tab:results} summarizes our results. Some of them improve our previous results presented in~\cite{eurocomb}.
\begin{table}[h!]
\renewcommand*\arraystretch{2.2} 
\begin{center}
  \begin{tabular}{| l |  c | }
    \hline
    Planar graphs & $ 2 + \Theta\left(\frac{1}{n^2}\right) \leq \lambda_{2\max} \leq 2 +  O\left(\frac{1}{n}\right)$  \\ \hline
    Bipartite planar graphs, $\delta=3$
    & $ 1 + \Theta\left(\frac{1}{n^2}\right) \leq \lambda_{2\max} \leq 1 +  O\left(\frac{1}{n^{1/3}}\right)$  \\ \hline
    Bipartite planar graphs, $n$ large & $\lambda_{2\max}=2$  \\  \hline
    Outerplanar graphs & $ 1 + \Theta\left(\frac{1}{n^2}\right) \leq \lambda_{2\max} \leq 1 +  O\left(\frac{1}{n}\right)$  \\  \hline
    Graphs of bounded genus~$g$ & $2 + \Theta\left(\frac{1}{n^2}\right) \leq  \lambda_{2\max} \leq 2+O\left(\frac{1}{\sqrt{n}}\right)$  \\  \hline
    $K_h$-minor-free graphs,  $4 \leq h\leq 9$ & $h-2 \leq \lambda_{2\max} \leq h-2+O\left(\frac{1}{\sqrt{n}}\right)$  \\  \hline
    $K_h$-minor-free graphs, $h$ large & $ \lambda_{2\max} \leq \alpha h\sqrt{\log(h)}+ O\left({\frac{ \alpha h^{5/2}\sqrt{\log(h)}}{\sqrt{n}}}\right)$\\
    & for $\alpha = 0.319\ldots + o(1)$ \\
    \hline
  \end{tabular}
\end{center}
\caption{The bounds on $\lambda_{2\max}$ obtained for each class of graphs studied.}
\label{tab:results}
\end{table}

For all upper bounds on $\lambda_{2\max}$ we make use of the following embedding lemma, which makes clear the relation between geometric embeddings of graphs and the Fiedler value. It is a direct consequence of the so-called Courant-Fischer principle and can be found in~\cite{mohar91,ST07}:
\begin{lemma}[Embedding Lemma]
\label{lem:embedding}
Let $G=(V,E)$ be a graph. Then $\lambda_2$, the Fiedler value of $G$, is given by
$$ \lambda_2 = \min \frac{\sum_{(i,j) \in E} ||\vec{v_i} - \vec{v_j} ||^2 } {\sum_{i=1}^{n}{||\vec{v_i}||^2}}$$
where the minimum is taken over all non-zero vectors $\{ \vec{v_1},\cdots, \vec{v_n}\} \subset \mathbb{R}^n$ such that $\sum_{i=1}^{n} \vec{v_i} =\vec{0}$.
\end{lemma}

We will make use of the Embedding Lemma~\ref{lem:embedding} in two ways. Before introducing them, let us state a result by Spielman and Teng for planar graphs:

\begin{theorem}[Spielman-Teng~\cite{ST07}]
\label{thm:spielman-teng}
Let $G$ be a planar graph with $n$ vertices and maximum degree~$\Delta$. Then, the Fiedler value of $G$ is at most $\frac{8\Delta}{n}.$
\end{theorem}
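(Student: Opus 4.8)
The plan is to invoke the Embedding Lemma~\ref{lem:embedding}: since $\lambda_2$ is the \emph{minimum} of the Rayleigh quotient over all centered configurations $\{\vec{v_1},\dots,\vec{v_n}\}$, any single well-chosen test configuration already furnishes an upper bound. For a planar graph the natural configuration comes from the Koebe--Andreev--Thurston circle packing theorem, which represents $G$ as a packing of spherical caps on the unit sphere $S^2 \subset \mathbb{R}^3$: each vertex $i$ corresponds to a cap $C_i$ with center $\vec{v_i}\in S^2$ and angular radius $\theta_i$, the caps have pairwise disjoint interiors, and two caps are externally tangent exactly when $ij\in E$. I would take the centers $\vec{v_i}$, regarded as vectors in $\mathbb{R}^3\subset\mathbb{R}^n$, as the test vectors.

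Before applying the lemma the constraint $\sum_i \vec{v_i}=\vec{0}$ must be secured, and this is arranged by composing the packing with a suitable Möbius transformation of the sphere. The normalization lemma one needs states that the packing may be chosen so that the centroid of the centers is the origin; it follows from a topological (degree / fixed-point) argument on the group of Möbius transformations, using that pushing the conformal mass toward a boundary point of the ball drags the centroid to that point, so that the origin is attained in between. I expect this centering step to be the main obstacle, since it is the only non-elementary ingredient and the one the whole estimate rests on.

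Granting a centered packing, the computation is routine. The denominator is $\sum_i \|\vec{v_i}\|^2 = n$, because every center lies on the unit sphere. For the numerator, if $ij\in E$ the caps are tangent, so the geodesic distance between their centers equals $\theta_i+\theta_j$ and hence the chord length satisfies $\|\vec{v_i}-\vec{v_j}\| = 2\sin\!\frac{\theta_i+\theta_j}{2} \le 2\bigl(\sin\tfrac{\theta_i}{2}+\sin\tfrac{\theta_j}{2}\bigr)$, where the inequality is just the angle-addition formula together with $\cos\le 1$. Writing $s_i=\sin(\theta_i/2)$, this yields $\|\vec{v_i}-\vec{v_j}\|^2 \le 8(s_i^2+s_j^2)$.

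Summing over edges and regrouping by endpoints gives $\sum_{ij\in E}\|\vec{v_i}-\vec{v_j}\|^2 \le 8\sum_i \deg(i)\,s_i^2 \le 8\Delta\sum_i s_i^2$. The last input is the packing constraint: the area of the cap $C_i$ on the unit sphere is $2\pi(1-\cos\theta_i)=4\pi s_i^2$, and disjointness forces $\sum_i 4\pi s_i^2 \le 4\pi$, i.e. $\sum_i s_i^2\le 1$. Hence the numerator is at most $8\Delta$, the Rayleigh quotient is at most $8\Delta/n$, and the Embedding Lemma gives $\lambda_2\le 8\Delta/n$, exactly as claimed. The only point I would verify carefully is the subadditivity step, since it is precisely what pins down the constant $8$; everything else is forced by the areas summing to $4\pi$ and by each radius $s_i^2$ appearing at most $\Delta$ times.
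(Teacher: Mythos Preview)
Your proposal is correct and is precisely the Spielman--Teng argument that the paper sketches immediately after stating the theorem: Koebe's circle packing, transported to the unit sphere and recentered by a M\"obius map so that $\sum_i \vec{v_i}=\vec{0}$, yields a test configuration with $\sum_{ij\in E}\|\vec{v_i}-\vec{v_j}\|^2\le 8\Delta$ and $\sum_i\|\vec{v_i}\|^2=n$, whence the Embedding Lemma gives the bound. The only cosmetic difference is that the paper phrases it as ``kissing disk embedding on the plane, then stereographic projection and sphere-preserving maps,'' whereas you invoke the spherical form of the Koebe--Andreev--Thurston theorem directly; these are equivalent formulations, and your identification of the centering step as the one nontrivial ingredient is exactly right.
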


In their proof, Spielman and Teng first used Koebe's kissing disk embedding~\cite{koebe} on the plane and then mapped the points of this embedding onto the unit sphere, using stereographic projection and sphere-preserving maps, in such a way that $$\sum_{(i,j) \in E} ||\vec{v_i} - \vec{v_j} ||^2 \leq 8\Delta \qquad\mbox{and}\qquad \sum_{i=1}^{n} \vec{v_i} =\vec{0}.$$ Their result is then straightforward from the Embedding Lemma~\ref{lem:embedding}.\\

Our first technique, for general planar graphs, uses the Embedding Lemma together with Theorem~\ref{thm:spielman-teng}. We embed vertices of high degree in the origin and use the embedding of Spielman and Teng for the remaining graph of bounded vertex-degree. For example, an optimal embedding (that gives the exact value of $\lambda_2$) for the wheel graph $W_{n+1}$ is to place the vertex of degree~$n$ at the origin and the remaining vertices on the unit circle, as vertices of a regular $n$-gon. Examples of planar graphs on $n$ vertices with large Fiedler value are constructed similarly, as we will see in Section~\ref{sec:lower_planar}.\\

Our second technique uses the Embedding Lemma together with a separator. We recall that a \emph{separator} $X$ of a graph $G=(V,E)$ is a subset $X \subset V$ whose removal from $G$ breaks the graph into several connected components. Similarly to the first method, we place the separator $X$ at the origin and the remaining vertices on the unit circle. This gives us the following bound on the Fiedler value~$\lambda_2(G)$, whose proof is deferred to Section~\ref{sec:separator}.

\begin{theorem}\label{thm:separator}
Let $G$ be a graph on $n$ vertices which has a separator $X$ such that each connected component of $G-X$ has at most $\frac{n-|X|}{2}$ vertices. Then $$\lambda_2(G) \leq \frac{|E_{X,G-X}|}{n-|X|},$$ where $E_{X,G-X}$ is the set of edges of $G$ with one endpoint in $X$ and the other in $G-X$.
\end{theorem}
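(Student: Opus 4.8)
The plan is to apply the Embedding Lemma~\ref{lem:embedding} constructively. Since $\lambda_2$ is a \emph{minimum} of the Rayleigh quotient over all centered embeddings, it suffices to exhibit a single admissible family of vectors $\{\vec{v_i}\}$ with $\sum_i \vec{v_i} = \vec{0}$ and to evaluate the quotient on it. Following the idea announced before the statement, I would place every separator vertex at the origin, setting $\vec{v_i} = \vec{0}$ for $i \in X$, and collapse each connected component of $G - X$ to a single point on the unit circle in $\mathbb{R}^2$: for a component $C$ I assign one common unit vector $\vec{p_C}$ to all of its vertices.

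The reason for collapsing each component to a point is that it trivializes the numerator of the quotient. An edge inside $X$ contributes $\|\vec 0 - \vec 0\|^2 = 0$; an edge inside a component $C$ contributes $\|\vec{p_C} - \vec{p_C}\|^2 = 0$; and, crucially, since $X$ is a separator there are no edges between distinct components, so the only surviving edges are exactly those of $E_{X,G-X}$, each contributing $\|\vec{p_C} - \vec 0\|^2 = \|\vec{p_C}\|^2 = 1$. Hence $\sum_{(i,j)\in E}\|\vec{v_i}-\vec{v_j}\|^2 = |E_{X,G-X}|$. Writing $n_C = |C|$, the denominator is $\sum_i \|\vec{v_i}\|^2 = \sum_C n_C\|\vec{p_C}\|^2 = \sum_C n_C = n - |X|$. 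Substituting these two quantities into the Embedding Lemma yields the claimed bound immediately.

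Everything then hinges on choosing the directions $\vec{p_C}$ so that the centering constraint $\sum_i \vec{v_i} = \vec{0}$, equivalently $\sum_C n_C\,\vec{p_C} = \vec{0}$, holds while each $\vec{p_C}$ remains a unit vector; this is the main obstacle, and the only place the hypothesis is used. Interpreting $n_C\vec{p_C}$ as a planar segment of length $n_C$, I need the segments of lengths $\{n_C\}$ to close up into a (possibly degenerate) polygon. By the generalized polygon inequality this is possible precisely when the longest side is at most the sum of the others, i.e. when $\max_C n_C \le \tfrac12\sum_C n_C = \tfrac{n-|X|}{2}$, which is exactly the balancedness assumption on the components. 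I would therefore state and justify this closing lemma, pick directions realizing such a polygon, and conclude. Note that the hypothesis also forces at least two components whenever $G - X \neq \emptyset$, so the $\vec{p_C}$ are genuinely nonzero and the embedding is admissible. A one-dimensional placement in $\{\pm 1\}$ would not work in general — three singleton components already fail any $\pm 1$ balance — so realizing the directions on the circle in $\mathbb{R}^2$ is essential.
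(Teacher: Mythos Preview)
Your proposal is correct and follows essentially the same route as the paper: place the separator at the origin, collapse each component of $G-X$ to a single unit vector on the circle, and invoke the Embedding Lemma. The only difference is cosmetic: where you appeal to the generalized polygon inequality to guarantee unit directions $\vec{p_C}$ with $\sum_C n_C\,\vec{p_C}=\vec 0$, the paper proves this closing step explicitly (its Lemmas~\ref{lem:geometric} and~\ref{lem:embed}) by grouping the $n_C$ into two or three blocks that satisfy the triangle inequality and then placing those blocks at three points on the unit circle.
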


Section~\ref{sec:upper} is devoted to prove the upper bounds on $\lambda_{2\max}$ given in Table~\ref{tab:results}. We outline here the main ingredients for each case:
\begin{itemize}
  \item The upper bound for planar graphs is obtained by an appropriate combination of our two techniques.
  \item The bound for the class of bipartite planar graphs and for $n$ large is obtained as a corollary of a result for bipartite planar graphs with minimum vertex degree~$3$, which is based only on Theorem~\ref{thm:spielman-teng}. A difficulty that arises here is how to bound the number of edges connecting vertices of high degree with vertices of small degree.
  \item For the class of outerplanar graphs we only need Theorem~\ref{thm:separator}.
  \item For the class of graphs of bounded genus, we use Theorem~\ref{thm:separator} together with the separator theorem of Gilbert et al.~\cite{gilbert}.
  \item For the class of $K_h$-minor-free graphs, the separator theorem for non-planar graphs by Alon et al.~\cite{alon}, together with the known maximal number of edges in $K_h$-minor-free graphs~\cite{song,thomason} can be used in Theorem~\ref{thm:separator}.
\end{itemize}

Finally, in Section~\ref{sec:lower_planar} we give examples of constructions which attain the lower bounds on $\lambda_{2\max}$ given in Table~\ref{tab:results}.

\section{Proof of Theorem~\ref{thm:separator}}\label{sec:separator}

In this section we prove two lemmas which together with the Embedding Lemma~\ref{lem:embedding} imply Theorem~\ref{thm:separator}.

\begin{lemma}
\label{lem:geometric}
Let $a,b,c>0$ be real numbers such that $a< b+c$, $b<c+a$, and $c<a+b$. Then, there are three points $\vec{w_1},\vec{w_2},\vec{w_3}\in\R^2$ on the unit circle,
such that $a\cdot \vec{w}_1 + b\cdot \vec{w}_2 + c\cdot \vec{w}_3=\vec{0}$.
\end{lemma}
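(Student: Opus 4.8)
The plan is to recognize that the inequalities $a<b+c$, $b<c+a$, and $c<a+b$ are precisely the strict triangle inequalities, so there is a nondegenerate triangle whose side lengths are $a$, $b$, and $c$. Traversing the boundary of such a triangle in a fixed direction produces three edge vectors $\vec e_1,\vec e_2,\vec e_3$ with $\|\vec e_1\|=a$, $\|\vec e_2\|=b$, $\|\vec e_3\|=c$, and, the boundary being a closed polygon, $\vec e_1+\vec e_2+\vec e_3=\vec 0$. Normalizing, the unit vectors $\vec w_1:=\vec e_1/a$, $\vec w_2:=\vec e_2/b$, $\vec w_3:=\vec e_3/c$ then satisfy $a\,\vec w_1+b\,\vec w_2+c\,\vec w_3=\vec e_1+\vec e_2+\vec e_3=\vec 0$, which is exactly the assertion. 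The whole content of the lemma is thus the classical fact that three planar vectors of prescribed lengths close up into a triangle if and only if those lengths obey the triangle inequalities.

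To make this explicit rather than invoking the existence of the triangle as a black box, I would fix the angle $\alpha\in(0,\pi)$ between $\vec w_1$ and $\vec w_2$ directly. The requirement $a\,\vec w_1+b\,\vec w_2+c\,\vec w_3=\vec 0$ forces $c\,\vec w_3=-(a\,\vec w_1+b\,\vec w_2)$, and taking squared norms of both sides, using $\|\vec w_1\|=\|\vec w_2\|=1$ and $\vec w_1\cdot\vec w_2=\cos\alpha$, gives $c^2=a^2+b^2+2ab\cos\alpha$, that is,
$$\cos\alpha=\frac{c^2-a^2-b^2}{2ab}.$$
First I would check that this value lies in the open interval $(-1,1)$: the inequality $\cos\alpha<1$ is equivalent to $c^2<(a+b)^2$, i.e. $c<a+b$, while $\cos\alpha>-1$ is equivalent to $c^2>(a-b)^2$, i.e. $|a-b|<c$, which is in turn equivalent to the two remaining inequalities $a<b+c$ and $b<c+a$. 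Hence all three hypotheses are used, and a unique such $\alpha\in(0,\pi)$ exists.

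Having fixed $\alpha$, I would set $\vec w_1:=(1,0)$ and $\vec w_2:=(\cos\alpha,\sin\alpha)$, both manifestly on the unit circle, and define $\vec w_3:=-\tfrac1c\,(a\,\vec w_1+b\,\vec w_2)$. Then $a\,\vec w_1+b\,\vec w_2+c\,\vec w_3=\vec 0$ holds by construction, and the choice of $\cos\alpha$ guarantees $\|c\,\vec w_3\|^2=a^2+b^2+2ab\cos\alpha=c^2$, so that $\vec w_3$ also lies on the unit circle, completing the proof. There is no genuine obstacle here beyond bookkeeping: the only point requiring care is the translation between the sign conditions on $\cos\alpha$ and the three triangle inequalities, where strictness of the hypotheses is exactly what rules out a degenerate (collinear) configuration and yields a well-defined $\alpha$ strictly between $0$ and $\pi$.
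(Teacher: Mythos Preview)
Your proof is correct and follows essentially the same geometric idea as the paper's: both identify the $\vec w_i$ with the unit direction vectors of the sides of a triangle with side lengths $a,b,c$, and both give explicit coordinates. The only cosmetic difference is that the paper parametrizes via the triangle's interior angles $\beta,\gamma$ and verifies the identity using the law of sines, whereas you parametrize via the exterior angle $\alpha=\pi-\gamma$ and use the law of cosines to check that $\cos\alpha\in(-1,1)$.
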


\begin{proof}
Consider the triangle with side lengths $a,b,c$ and respective opposite angles $\alpha,\beta,\gamma$ (see Figure~\ref{fig:trigon}) and define
$$
\begin{array}{ccc}
\vec{w}_1=(1,0), & \vec{w}_2=(\cos(\pi+\gamma),\sin(\pi+\gamma)), & \vec{w}_3=(\cos(\pi-\beta),\sin(\pi-\beta)).
\end{array}
$$

\begin{figure}[htb]
\begin{center}
\includegraphics[width=0.25\textwidth]{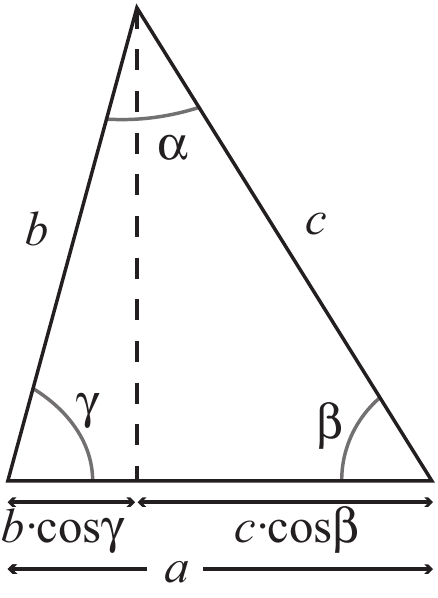}
\caption{Illustration of the proof of Lemma~\ref{lem:geometric}.}
\label{fig:trigon}
\end{center}
\end{figure}

Then $a\cdot \vec{w}_1+b\cdot \vec{w}_2+c\cdot \vec{w}_3$ equals
$$
\begin{array}{l}
(a,0)+(b\cos(\pi+\gamma),b\sin(\pi+\gamma))+(c\cos(\pi-\beta),c\sin(\pi-\beta))
\\
=(a,0)+(-b\cos\gamma,-b\sin\gamma)+(-c\cos\beta,c\sin\beta)
\\
=(a-b\cos\gamma-c\cos\beta,c\sin\beta-b\sin\gamma).
\end{array}
$$

The first component of this vector is~$0$, since $a=c\cos\beta+b\cos\gamma$ (see Figure~\ref{fig:trigon}). The second component is also~$0$ since, by the law of sines,
$
\frac{\sin\alpha}{a}=\frac{\sin\beta}{b}=\frac{\sin\gamma}{c}.
$
\end{proof}

\begin{lemma}\label{lem:embed}
Let $r,s$ be integers such that $s\geq r \geq 2$. Let $k_1,\ldots,k_r>0$ be real numbers such that $k_1+k_2+\dots +k_r=s$ and $k_i\le \frac{s}{2},\ \forall i\in\{1,\ldots,r\}$. Then, there are $r$ points $\vec{v}_i=(x_i,y_i)\in\R^2$ (not necessarily different) on the unit circle such that
$$
\sum_{i=1}^r k_i\cdot \vec{v}_i=\vec{0}.
$$
\end{lemma}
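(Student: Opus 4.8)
The plan is to argue by induction on $r$, using Lemma~\ref{lem:geometric} as the engine for the base case $r=3$ and reducing larger instances to smaller ones by \emph{merging} weights. The key observation is that, for three weights summing to $s$, the hypotheses $k_i \le s/2$ are exactly the (non-strict) triangle inequalities, since $k_i \le s/2 = \tfrac{1}{2}(k_i+k_j+k_l)$ is equivalent to $k_i \le k_j + k_l$. This is what lets the geometric lemma be applied at the bottom of the induction.

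For the base cases, when $r=2$ the constraints $k_1,k_2 \le s/2$ together with $k_1+k_2=s$ force $k_1=k_2=s/2$, so I take $\vec{v}_1=(1,0)$ and $\vec{v}_2=(-1,0)$. When $r=3$, if all three inequalities are strict, i.e.\ $k_i < s/2$ for every $i$, then $a=k_1,b=k_2,c=k_3$ satisfy the strict triangle inequalities and Lemma~\ref{lem:geometric} produces the desired unit vectors. The only remaining possibility is that some weight equals $s/2$, say $k_1=s/2$, whence $k_2+k_3=s/2$; here the triangle degenerates and I place the points collinearly, $\vec{v}_1=(1,0)$ and $\vec{v}_2=\vec{v}_3=(-1,0)$, so that $k_1\vec{v}_1+(k_2+k_3)(-1,0)=\vec{0}$. (Two weights cannot both equal $s/2$, since that would force a third to be $0$.)

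For the inductive step, let $r\ge 4$ and order the weights so that $k_1\le k_2\le\cdots\le k_r$, then merge the two smallest into a single weight $k_1+k_2$, keeping the others; this yields $r-1$ positive reals summing to $s$. The crucial point — which I expect to be the main obstacle — is to verify that the merged weight still obeys the cap, $k_1+k_2\le s/2$. I would establish the sharper bound $k_1+k_2\le \tfrac{2s}{r}$: assuming the contrary gives $k_2\ge\tfrac{k_1+k_2}{2}>\tfrac{s}{r}$, hence $k_i>\tfrac{s}{r}$ for all $i\ge 2$, and then $s = k_1+k_2+\sum_{i\ge 3}k_i > \tfrac{2s}{r}+(r-2)\tfrac{s}{r}=s$, a contradiction. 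Since $r\ge 4$ gives $\tfrac{2s}{r}\le \tfrac{s}{2}$, the merged instance meets all hypotheses (with the same $s$, and $s\ge r-1\ge 2$), so the induction hypothesis supplies unit vectors $\vec{u}_1,\ldots,\vec{u}_{r-1}$ with $\sum_{i=1}^{r-1} k_i' \vec{u}_i=\vec{0}$, where $k_1'=k_1+k_2$ and $k_i'=k_{i+1}$ for $i\ge 2$. I then ``unmerge'' by assigning the direction $\vec{u}_1$ of the combined weight to both original points, $\vec{v}_1=\vec{v}_2=\vec{u}_1$, and keeping $\vec{v}_{i+1}=\vec{u}_i$ for the rest; since $(k_1+k_2)\vec{u}_1=k_1\vec{v}_1+k_2\vec{v}_2$, the weighted sum is preserved and equals $\vec{0}$.

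It is worth recording why $r=3$ must genuinely be a base case rather than reduced further: there the merge bound is only $k_1+k_2\le\tfrac{2s}{3}$, which can exceed $s/2$ (for instance when all weights equal $s/3$), so collapsing to $r=2$ would violate the cap. That borderline configuration is exactly the one Lemma~\ref{lem:geometric} was designed to handle, confirming that the two lemmas dovetail precisely.
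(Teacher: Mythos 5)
Your proof is correct, but it takes a different route from the paper's. The paper gives a one-shot, non-inductive construction: after sorting the weights, it picks the smallest index $\ell$ for which the prefix sum $k_1+\dots+k_\ell$ is at least the suffix sum $k_{\ell+1}+\dots+k_r$, and then either the two sums balance exactly at $s/2$ (handled by two antipodal points) or the three aggregated quantities $a=k_1+\dots+k_{\ell-1}$, $b=k_\ell$, $c=k_{\ell+1}+\dots+k_r$ satisfy the strict triangle inequalities by the minimality of $\ell$, so Lemma~\ref{lem:geometric} applies directly. You instead induct on $r$, merging the two smallest weights and verifying the cap is preserved via the averaging bound $k_1+k_2\le \tfrac{2s}{r}\le \tfrac{s}{2}$ for $r\ge 4$, with Lemma~\ref{lem:geometric} invoked only at the base case $r=3$, where (as you nicely observe) the cap $k_i\le s/2$ \emph{is} the triangle inequality. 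Both proofs ultimately group the $r$ weights into at most three directions and handle a degenerate collinear case; what differs is the aggregation mechanism. The paper's pivot argument is shorter and needs no induction (in particular it never uses that $s$ and $r$ are integers, beyond what the statement asserts), while yours modularizes the difficulty into a clean, reusable inequality and makes the interface with the geometric lemma transparent -- at the mild cost of carrying the induction bookkeeping and the separate $r=2,3$ base cases.
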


\begin{proof}
Without loss of generality, we can assume that $k_1\le k_2\le \dots\le k_r$.
Since $k_i\le \frac{s}{2}$ for all $i\in\{1,\ldots,r\}$, we have $k_1+k_2+\cdots +k_{r-1}\ge k_r$.
Let $\ell\in\{1,\ldots,r\}$ be the smallest integer such that $k_1+k_2+\dots +k_{\ell}\ge k_{\ell+1}+\dots +k_r$.

\begin{itemize}
\item
If $k_1+k_2+\cdots +k_{\ell}= k_{\ell+1}+\cdots +k_r=\frac{s}{2}$, then it is enough to define $\vec{v}_i=(1,0)$ for $i\in\{1,2,\ldots, \ell\}$ and $\vec{v}_i=(-1,0)$ for $i\in\{\ell+1,\ldots,r\}$, since then
$$
\sum_{i=1}^r k_i\cdot \vec{v}_i=
\sum_{i=1}^{\ell} k_i \cdot (1,0)+\sum_{i=\ell+1}^{r} k_i\cdot (-1,0)=
\frac{s}{2}(1,0)+\frac{s}{2}(-1,0)=
\vec{0}.
$$
\item
Otherwise, $k_1+k_2+\dots +k_{\ell}> k_{\ell+1}+\dots +k_r$ and also $k_1+k_2+\dots +k_{\ell-1}< k_{\ell}+\dots +k_r$.
Then, we take $a=k_1+\dots +k_{\ell-1}$, $b=k_{\ell}$, and $c=k_{\ell+1}+\dots +k_r$, which fulfill that
$a<b+c$, $b<c+a$, and $c<a+b$.

Hence, by Lemma~\ref{lem:geometric}, there are three points $\vec{w_1}$, $\vec{w_2}$, and $\vec{w_3}$ on the unit circle such that $a\cdot \vec{w}_1 + b\cdot \vec{w}_2 + c\cdot \vec{w}_3=\vec{0}$.

Let us now define

\begin{center}
$\vec{v}_i=\vec{w}_1$ for $i\in\{1,\ldots ,\ell-1\}$, \quad $\vec{v}_{\ell}=\vec{w}_2$, \quad $\vec{v}_i=\vec{w}_3$ for $i\in\{\ell+1,\ldots, r\}$,
\end{center}

which satisfy
$$
\sum_{i=1}^r k_i\cdot \vec{v_i}=
(k_1+\cdots +k_{\ell-1})\cdot \vec{w}_1+k_{\ell}\cdot \vec{w}_2+(k_{\ell+1}+ \cdots +k_r)\cdot \vec{w}_3
$$
$$
= a\cdot \vec{w}_1+b\cdot \vec{w}_2+c\cdot \vec{w}_3
=\vec{0}.
$$
\end{itemize}
\end{proof}

\noindent{\bf{Proof of Theorem~\ref{thm:separator}.}}
Use Lemma~\ref{lem:embed}, with $k_1,\ldots,k_r$ being the sizes of the connected components of~$G-X$ and $s=n-|X|$, to obtain a point~$\vec{v_i}$ on the unit circle for each connected component, with $\sum_{i=1}^r k_i\cdot \vec{v}_i=\vec{0}$. Place the vertices of each component at the corresponding~$\vec{v_i}$ and the vertices of $X$ at the origin. Then $\sum_{i=1}^n \vec{v}_i=\vec{0}$ and each edge between $X$ and $G-X$ has length one, while the remaining ones have length zero. The Embedding Lemma~\ref{lem:embedding} gives then $\lambda_2(G) \leq \frac{|E_{X,G-X}|}{n-|X|}$, as desired.
\qed

\section{Upper bounds on $\lambda_{2\max}$}
\label{sec:upper}

\subsection{General planar graphs}
\label{subsec:planar}
In our conference paper~\cite{eurocomb} we used the embedding lemma of Spielman and Teng to obtain that, for the class of planar graphs, $\lambda_{2\max}\leq 2+O(1/\sqrt{n})$. In the present paper we improve this bound to $\lambda_{2\max}\leq 2+O(1/n)$. The proof relies on the following technical lemma:

\begin{lemma}\label{lem:sep_planar}
Let $G$ be a planar graph on $n$ vertices such that the number of edges incident to vertices of degree at least $\frac{n}{K}$ is at least $(2-\frac{8}{K})n$, for some sufficiently large constant $K$.
Then $G$ has a separator $X$ of constant size such that:
\begin{itemize}
  \item[(i)] Each component of $G-X$ has less than $\frac{n}{3}$ vertices.
  \item[(ii)] The cardinality of the set~$E_{X,G-X}$ of edges of $G$ with one endpoint in $X$ and the other in $G-X$ is at most $\left(2+\frac{16}{K}\right)n$ plus some constant.
\end{itemize}
\end{lemma}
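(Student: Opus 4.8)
The plan is to take as separator the set $H$ of ``hubs'', i.e.\ all vertices of degree at least $n/K$, together with a bounded number of extra low-degree vertices. First I would record that $|H|$ is constant: since $G$ is planar, $\sum_v \deg(v) = 2|E| < 6n$, so at most $6K$ vertices can have degree $\ge n/K$, giving $|H| \le 6K$. Next I would use the hypothesis to control the edge counts. Writing $E_{H,H}$ for the edges inside $H$ (at most $3|H|-6$ by planarity, hence $O(K)$) and $E_{H,G-H}$ for the edges leaving $H$, the hypothesis gives $|E_{H,G-H}| \ge (2-\tfrac{8}{K})n - O(K)$. On the other hand the bipartite graph between $H$ and $G-H$ is planar and triangle-free, so $|E_{H,G-H}| \le 2n-4$; this already yields the ``$2n$'' part of~(ii) when $X=H$. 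Comparing the two bounds, and using that only the set $W\subseteq G-H$ of vertices adjacent to $H$ carries these edges, the bipartite planar bound $|E_{H,G-H}| \le 2(|H|+|W|)-4$ forces $|W| \ge (1-\tfrac{4}{K})n - O(K)$; equivalently the set $U$ of vertices of $G-H$ with no neighbour in $H$ satisfies $|U| \le \tfrac{4}{K}n + O(K)$.

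The remaining task is~(i): ensuring every component of $G-X$ has fewer than $n/3$ vertices. At most a constant number (in fact at most three) of the components of $G-H$ can have size $\ge n/3$, so it suffices to break each such large component $C$ with a bounded number of additional vertices. The key structural point is that almost all of $C$ lies in $W$, so the vertices of $C\cap W$ are all adjacent to the constant-size set $H$. Hence $(C\cap W)\cup H$ admits a breadth-first layering from $H$ with only two nonempty layers, and since it is classical that a planar graph with a bounded number of breadth-first layers has bounded treewidth, while deleting $H$ cannot increase treewidth, the graph $C\cap W$ has treewidth bounded by an absolute constant. Crucially this bound does not grow with $|H|$, hence is independent of $K$. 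Therefore $C\cap W$ has a separator of absolute-constant size splitting it into parts of at most two thirds of its size, and iterating a constant number of times breaks $C\cap W$ into parts, each carrying fewer than $\tfrac{n}{3}-\tfrac{4}{K}n$ of its $W$-vertices, using only an absolute-constant number of vertices.

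The main obstacle, and where I expect the real work to lie, is the set $U$ of up to $\tfrac{4}{K}n$ vertices not adjacent to $H$: these sit at distance $\ge 2$ from $H$, so they spoil the clean two-layer picture and, worse, a $U$-vertex (or a short $U$-path) adjacent to different pieces could re-merge parts that the $W$-separator had split, potentially recreating a component of size $\ge n/3$. I would handle this by showing that the few $U$-vertices attach as low-degree appendages that can be absorbed into the existing pieces without pushing any of them past $n/3$ (this is exactly why the $W$-pieces were cut below $\tfrac{n}{3}-\tfrac{4}{K}n$), adding a bounded number of $U$-vertices to $X$ only where genuine merging would otherwise occur. The delicate quantitative point is that the total number of extra vertices added to $X$ must stay below an absolute constant (around $16$), since each such vertex has degree $< n/K$ and hence contributes at most $n/K$ to $|E_{X,G-X}|$; it is precisely this cap that produces the claimed $(2+\tfrac{16}{K})n$ bound in~(ii) on top of the $2n$ coming from $E_{H,G-H}$. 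Putting the pieces together, $X=H$ together with these few vertices is a separator of constant size satisfying both~(i) and~(ii).
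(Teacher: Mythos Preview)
The proposal has a real gap at the treewidth step, and this gap propagates fatally into your bound for~(ii).

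The ``classical'' fact you invoke---few BFS layers in a planar graph imply bounded treewidth---is a statement about BFS from a \emph{single} root (equivalently: a planar graph of radius $r$ has treewidth $\le 3r$). The proof contracts the earlier layers to a point, which preserves planarity only because that set is connected. With root set $H$ of size $m>1$ this contraction is illegal unless $H$ happens to be connected in $G$, and in general the treewidth of $G[W]$ \emph{does} grow with $m$: take any planar $P$ of treewidth $t$, attach a pendant leaf $h_v$ to each $v\in V(P)$, and set $H=\{h_v\}$, $W=V(P)$; every $W$-vertex is adjacent to $H$, yet $G[W]=P$ has treewidth $t$. So the assertion that the bound ``does not grow with $|H|$, hence is independent of $K$'' is false at the level of generality you use. (You might hope to rescue it using that every $h\in H$ has degree $\ge n/K$, but you never invoke this, and it is not clear how to turn it into an absolute-constant treewidth bound.)

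This is exactly the place where your accounting for (ii) breaks. You bound
\[
|E_{X,G-X}| \;\le\; |E_{H,G-H}| \;+\; \sum_{x\in X\setminus H}\deg(x) \;\le\; 2n \;+\; |X\setminus H|\cdot\frac{n}{K},
\]
so hitting $(2+\tfrac{16}{K})n$ forces $|X\setminus H|\le 16$ absolutely. A treewidth (hence separator) bound that scales with $|H|\le 6K$ yields $|X\setminus H|=f(K)$ and an extra $f(K)\,n/K$ term, which need not be $O(n/K)$. The vague treatment of the $U$-vertices has the same flaw: you have no mechanism guaranteeing that only $O(1)$ of them (absolutely) must enter $X$.

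For comparison, the paper's argument decouples~(ii) from the size of $X\setminus H$ entirely. After triangulating, it shows that a set $B_2$ of at least $(1-\tfrac{16}{K})n-O(1)$ edges lies on the link cycles of \emph{two} hubs, partitions $B_2$ into $O(1)$ ``diamonds'', and cuts each diamond by a constant number of vertices chosen along its cyclic $B_2$-order. For~(ii) it never sums degrees of the added vertices; instead it argues that all but $O(1)$ edges of $B_2$ lie entirely inside $G-X$, whence
\[
|E_{X,G-X}| \;\le\; (3n-6)\;-\;|B_2|\;+\;O(1)\;\le\;\Bigl(2+\tfrac{16}{K}\Bigr)n+O(1).
\]
That estimate tolerates $|X|$ being any $K$-dependent constant, which is why the paper does not need (and does not claim) an absolute-constant separator independent of $K$.
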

\begin{proof}
Since adding edges to a graph can only increase the minimal size of the separator and the size of~$E_{X,G-X}$, we can assume that $G$ is a triangulation (maximal planar graph), then having $3n-6$ edges. Consider a plane embedding of $G$.
Denote by $\cal{C}$ the set of vertices of $G$ with degree at least $\frac{n}{K}$ and note that for every planar graph $G$, it holds that $|\cal{C}|$ is in $O(1)$. Denote by $E_{\cal{C}}$ the set of edges emanating from $\cal{C}$.

The vertices of $\cal{C}$ can be seen as centers of wheels. Observe that for each wheel the length of its bounding cycle equals the number of edges incident to its center. Thus, let us rephrase the hypothesis of the lemma in terms of edges of those bounding cycles, which we will call \emph{boundary edges}. Since every edge of a planar graph belongs to at most two triangular faces, any edge is a boundary edge of at most two wheels.
Let $B_i$, with $i\in\{1,2\}$, be the set of boundary edges of the graph that belong to $i$ wheels with center in~$\cal{C}$.

Therefore, the hypothesis $|E_{\cal{C}}| \geq \left(2-\frac{8}{K}\right)n$ can be rephrased as
\begin{equation}
|B_1|+2|B_2|\geq\left(2-\frac{8}{K}\right)n.
\label{eq:planarB_1+2B_2}
\end{equation}

Now, let us count the number of edges in $E_{\cal{C}}\cup B_1\cup B_2$. It might happen that two vertices $v$ and $w$ of $\cal{C}$ are adjacent, in which case two boundary edges of the wheel with center $v$ are incident to $w$, and thus belong to the set $E_{\cal{C}}$; and vice versa for two boundary edges of $w$. Since $|\cal{C}|$ is a constant, it follows that the number of edges that are counted both for $E_{\cal{C}}$ and for $B_1\cup B_2$ is a constant, say~$f$. Note that we do not care about the precise value of $f$ (and some more constants defined later), which is not relevant for the proof.
Given that the number of edges of $G$ is $3n-6$, we get that
$$\left(2-\frac{8}{K}\right)n + |B_1|+|B_2| - f \leq |E_{\cal{C}}\cup B_1\cup B_2|\leq 3n-6$$ and thus, from Inequality~(\ref{eq:planarB_1+2B_2}),
$$\left(2-\frac{8}{K}\right)n + \left(2-\frac{8}{K}\right)n-2|B_2|+|B_2| - f \leq 3n-6$$ which gives
\begin{equation}
|B_2|\geq \left(1-\frac{16}{K}\right)n+6-f.
\label{eq:planarB_2}
\end{equation}
We will define a separator $X$ for $G$ such that each component of $G-X$ will have at most $\frac{n}{3}$ vertices.
The separator $X$ will be composed of all vertices of $\cal{C}$ (a constant number) plus a constant number of vertices of $V_{B_2}$, where $V_{B_2}$ denotes the set of endpoints of edges in~$B_2$. In order to choose those vertices of~$V_{B_2}$, we need to define the notion of diamond as follows.

Consider two wheels, with centers $v,w \in \cal{C}$ having common boundary edges. Order these edges cyclically around $v$, so that their endpoints form an ordered list of vertices $a_1,a_2,\ldots, a_\ell$, without repetition.
Take two vertices $a_i,a_j$ with $i < j$ and consider the smallest cycle passing through $v,a_i,w,a_j$ and having all $a_k$ with $i<k<j$ in its interior. Note that the length of this cycle can only be~$4$.
Then, we define as $D_{v,a_i,w,a_j}$ the subgraph of~$G$ induced by the vertices on the boundary and in the interior of the cycle. Such a $D_{v,a_i,w,a_j}$ is said to be a \emph{diamond} if among its vertices only $v$ and $w$ belong to $\cal{C}$.

\begin{claim}
The set of boundary edges $B_2$ can be partitioned into a constant number of diamonds.
\begin{claimproof}
Recall that each edge of $B_2$ belongs to two wheels. Since the number of wheels with center in $\cal{C}$ is constant, so is the number of pairs of wheels with centers in $\cal{C}$. Given such a pair with centers $v$ and $w$, we consider their common boundary edges $a_1a_2=e_1,e_2,\ldots,e_k=a_{\ell-1}a_{\ell}$ and the subgraph bounded by $v,a_1,w,a_\ell$. This might not be a diamond, if other vertices of $\cal{C}$ lie in the interior of the subgraph. In that case, the set of edges $\{e_1,\ldots,e_k\}$ would not be contained in a single diamond but in the union of several diamonds (see Figure~\ref{fig:diamant}).
\begin{figure}[htb]
\begin{center}
\includegraphics[width=0.6\textwidth]{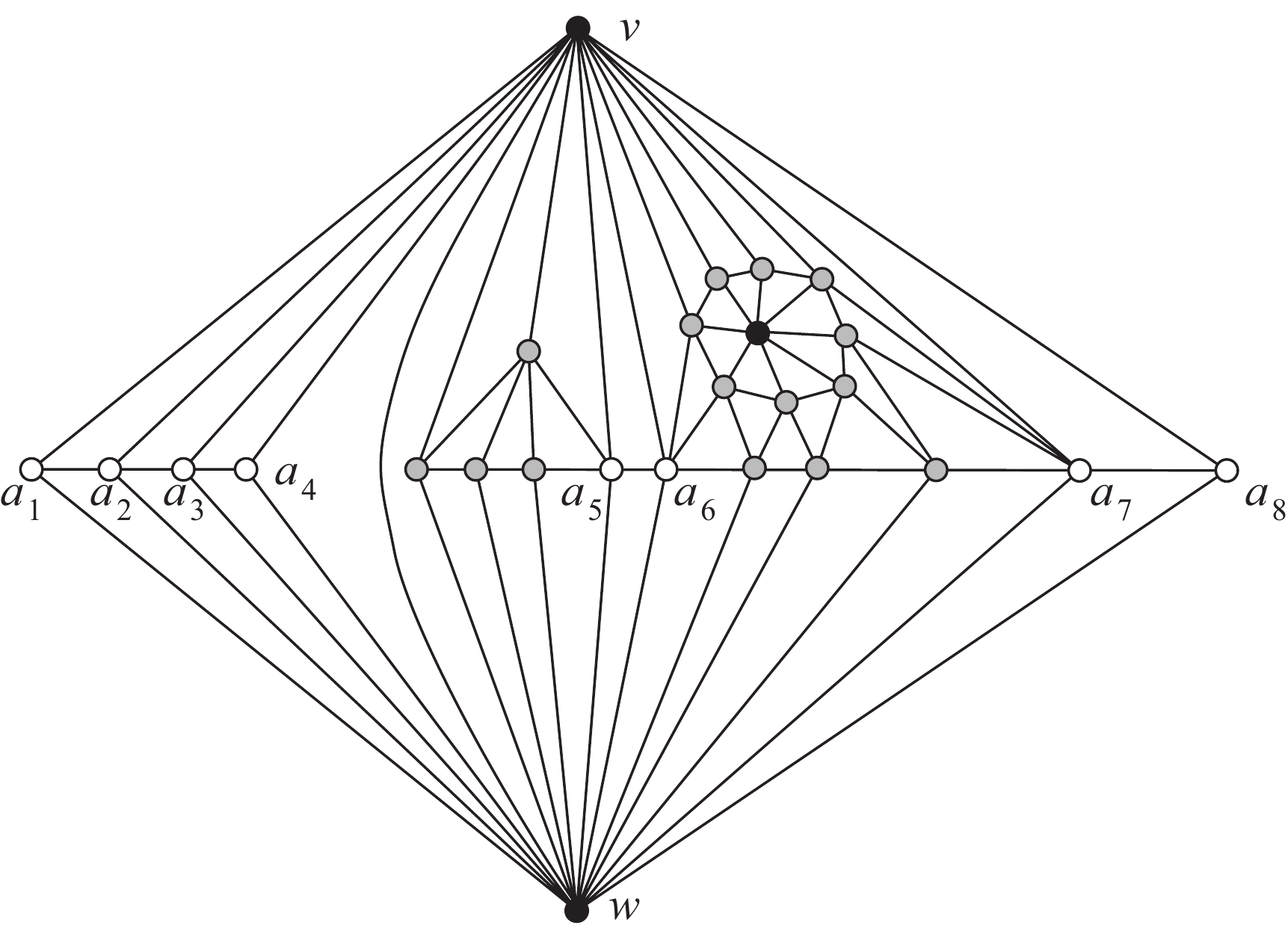}
\caption{Union of several diamonds. Black vertices are in~$\cal{C}$.}
\label{fig:diamant}
\end{center}
\end{figure}
In that case, we can split those edges into at most $|\cal{C}|$ diamonds, which is a constant number.
\end{claimproof}
\end{claim}

\begin{claim}
Each diamond $D$ containing more than $\frac{n}{3}$ vertices has a separator $X_D$ of constant size such that the components of $D-X_D$ have less than $\frac{n}{3}$ vertices each.
\begin{claimproof}
Note that a vertex of~$V_{B_2}$ might belong to several
bounding cycles of wheels and that the number of pairs of those cycles is constant (because of the number of pairs of wheels with centers in~$\cal{C}$ being constant). Hence, from the definition of~$V_{B_2}$ and Inequality~(\ref{eq:planarB_2}), it follows that
\begin{equation}
\label{eq:planarB}
|V_{B_2}|\geq \left(1-\frac{16}{K}\right)n-\bar{f}
\end{equation}
for some constant~$\bar{f}$.

Therefore, the number of vertices that are not in~$V_{B_2}$
is at most $\frac{16}{K}n+\bar{f}$ and so is the number of vertices inside $D$
and not in~$V_{B_2}$.

Hence, it is sufficient to split into four equal components the vertices of~$V_{B_2}$ in $D$,
since then each component can have at most~$\frac{n}{4}+\frac{16}{K}n+\bar{f}\leq\frac{n}{3}$ vertices, where the last inequality holds for large enough values of~$K$.

It just remains to define a separator $X_D$, of constant size, which splits the vertices of~$V_{B_2}$ in $D$
into four equal components.
Let the diamond $D$ be actually $D_{v,a_i,w,a_j}$ and let us call $z_1,z_2,\ldots,z_t$ the edges of~$B_2$ in the diamond, in the cyclic order around~$v$. Then it is enough to include in~$X_D$ the endpoints of the edges $z_{\left\lceil t/4\right\rceil},z_{\left\lceil t/2\right\rceil}$ and $z_{\left\lceil 3t/4\right\rceil}$.
\end{claimproof}
\end{claim}

Now, we define the separator~$X$ as the union of $\mathcal{C}$ and the separators~$X_D$ for the diamonds~$D$ from Claim~1, which are a constant number. Claim~2 ensures that each component of those $D-X_D$ has less than $\frac{n}{3}$ vertices.

 Let us now verify that also each remaining component of~$G-X$ has at most $\frac{n}{3}$ vertices. From Inequality~(\ref{eq:planarB}), we know that the number of vertices not in~$V_{B_2}$ is at most $\frac{16}{K}n+\bar{f}$. Hence, the number of vertices not belonging to any of the diamonds from Claim~1 is at most $\frac{16}{K}n+\bar{f}$. Therefore, the number of vertices in the remaining components of $G-X$ is also at most $\frac{16}{K}n+\bar{f}$. For $K$ sufficiently large, this number is less than $\frac{n}{3}$, which proves part~(i) of the statement.

In order to prove part~(ii) of the statement, we have to bound the cardinality of the set~$E_{X,G-X}$ of edges with one endpoint in~$X$ and the other in~$G-X$. In order to do so, let us look at the complement of~$E_{X,G-X}$ and show that it contains many of the edges in~$B_2$.

Recall that~$B_2$ is a union of cycles and paths, a subset of the bounding cycles of wheels with center in~$\mathcal{C}$, and
that the number of pairs of wheels with centers in $\cal{C}$ is constant. Hence, each vertex in $V_{B_2}$ belongs to a constant number of those cycles and paths that form~$B_2$. Then, by the definition of~$X$, each vertex in~$X$ can only be incident to a constant number of edges in~$B_2$. Furthermore, the separator~$X$ has constant size. It follows that the number of edges in~$B_2\cap E_{X,G-X}$ is some constant~$f'$.

Since~$G$ has~$3n-6$ edges, we get $|E_{X,G-X}|\leq 3n-6 -|B_2|+f'$ and using Inequality~(\ref{eq:planarB_2}) it follows that
\[
|E_{X,G-X}|\leq 3n-6-\left(1-\frac{16}{K}\right)n-6+f+f'=\left(2+\frac{16}{K}\right)n-12+f+f',
\]
which settles part~(ii) of the statement.
\end{proof}

\begin{theorem}\label{thm:upper_planar}
For the class of planar graphs, $\lambda_{2\max} \leq 2+O(\frac{1}{n}).$
\end{theorem}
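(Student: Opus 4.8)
The plan is to combine the two embedding techniques by means of a dichotomy governed by how much of the edge set is concentrated on high-degree vertices. Fix a sufficiently large constant $K$ and let $\mathcal{C}$ denote the set of vertices of $G$ of degree at least $\frac{n}{K}$. Since $G$ is planar, $\sum_{v}\deg(v)=2|E|\le 6n$, so $|\mathcal{C}|\le 6K=O(1)$. Write $E_{\mathcal{C}}$ for the set of edges incident to $\mathcal{C}$. I would split according to whether $|E_{\mathcal{C}}|$ lies below or above the threshold $\left(2-\frac{8}{K}\right)n$ that appears in Lemma~\ref{lem:sep_planar}.

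First I would treat the case $|E_{\mathcal{C}}|<\left(2-\frac{8}{K}\right)n$ with the first technique. Place every vertex of $\mathcal{C}$ at the origin and apply the Spielman--Teng embedding (the embedding form of Theorem~\ref{thm:spielman-teng}) to $G-\mathcal{C}$, which is planar with maximum degree less than $\frac{n}{K}$. This produces unit vectors for the vertices of $G-\mathcal{C}$, centered at the origin, with $\sum_{(i,j)\in E(G-\mathcal{C})}\|\vec{v_i}-\vec{v_j}\|^2\le \frac{8n}{K}$. Adjoining the origin for $\mathcal{C}$ keeps the configuration centered; edges inside $\mathcal{C}$ contribute $0$, while each of the at most $|E_{\mathcal{C}}|$ edges between $\mathcal{C}$ and $G-\mathcal{C}$ contributes exactly $1$. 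The Embedding Lemma~\ref{lem:embedding} then gives
\[
\lambda_2(G)\le \frac{\frac{8n}{K}+|E_{\mathcal{C}}|}{n-|\mathcal{C}|}<\frac{\frac{8n}{K}+\left(2-\frac{8}{K}\right)n}{n-|\mathcal{C}|}=\frac{2n}{n-|\mathcal{C}|}=2+O\!\left(\tfrac{1}{n}\right),
\]
where the last step uses $|\mathcal{C}|=O(1)$.

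In the complementary case $|E_{\mathcal{C}}|\ge\left(2-\frac{8}{K}\right)n$, the hypothesis of Lemma~\ref{lem:sep_planar} is met, so $G$ admits a separator $X$ of constant size whose removal leaves components each of fewer than $\frac{n}{3}$ vertices, with $|E_{X,G-X}|\le\left(2+\frac{16}{K}\right)n+O(1)$. Since $|X|=O(1)$, for $n$ large each component has fewer than $\frac{n}{3}\le\frac{n-|X|}{2}$ vertices, so Theorem~\ref{thm:separator} applies and yields
\[
\lambda_2(G)\le\frac{|E_{X,G-X}|}{n-|X|}\le\frac{\left(2+\frac{16}{K}\right)n+O(1)}{n-|X|}.
\]
It is the diamond decomposition behind Lemma~\ref{lem:sep_planar} that does the real work here: it internalizes almost all of the boundary edges shared by two wheels centered at $\mathcal{C}$, so that these edges have length $0$ in the embedding and the crossing-edge count stays near $2n$, rather than the $3n$ a naive collapse of the components would incur through a single hub.

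The step I expect to be the crux is making the error uniform across the two regimes so as to reach $2+O(1/n)$ rather than the weaker $2+O(1/\sqrt{n})$ of the conference version. The first-technique bound increases with $|E_{\mathcal{C}}|$ while the separator bound decreases with it, and a crude balancing of $K$ against $n$ leaves an additive $\Theta(1/K)$ together with a $\Theta(|X|/n)$ term. To close this I would track the sharper form of Lemma~\ref{lem:sep_planar}(ii), in which the crossing-edge count is controlled by $6n-2|E_{\mathcal{C}}|+O(1)$, so that the separator estimate genuinely improves as $|E_{\mathcal{C}}|$ approaches its planar maximum; fixing $K$ as a large constant and handling the narrow intermediate range $|E_{\mathcal{C}}|\approx 2n$ where both estimates are simultaneously weak is the delicate point. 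The remaining verifications, that the constant-size separator can be built while keeping the configuration centered and no component above $\frac{n}{3}$, are routine consequences of Lemmas~\ref{lem:geometric}, \ref{lem:embed} and~\ref{lem:sep_planar}.
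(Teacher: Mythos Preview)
Your setup and Case~1 match the paper exactly and are correct. The gap is in Case~2. You bound $|E_{X,G-X}|$ by the conclusion of Lemma~\ref{lem:sep_planar}(ii), namely $(2+16/K)n+O(1)$, which yields only $\lambda_2(G)\le 2+\frac{16}{K}+O(\frac{1}{n})$. Since $K$ must be a fixed constant (it sets the degree threshold defining $\mathcal{C}$, and Lemma~\ref{lem:sep_planar} requires it), the $16/K$ term is an additive constant and cannot be absorbed into $O(1/n)$. Your proposed repair via the sharper estimate $|E_{X,G-X}|\le 6n-2|E_{\mathcal{C}}|+O(1)$ does not close the gap either: at the threshold $|E_{\mathcal{C}}|=(2-\frac{8}{K})n$ it still gives $(2+\frac{16}{K})n$, and no shift of the threshold makes both cases simultaneously yield $2+O(1/n)$, because the Case~1 bound needs $|E_{\mathcal{C}}|\le(2-\frac{8}{K})n$ while the sharpened Case~2 bound needs $|E_{\mathcal{C}}|\ge 2n-O(1)$, and these ranges are separated by a gap of width $\Theta(n)$. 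The ``delicate point'' you flag is not a technicality but a genuine obstruction to this line of argument.

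The paper sidesteps the issue with a one-line observation that you are missing: the edge set $E_{X,G-X}$ is, by construction, the edge set of a \emph{bipartite} planar graph on vertex classes $X$ and $V\setminus X$, and therefore $|E_{X,G-X}|\le 2n-4$ outright. This bound is independent of $K$ and of $|E_{\mathcal{C}}|$, so Theorem~\ref{thm:separator} gives $\lambda_2(G)\le\frac{2n-4}{n-|X|}=2+O(\frac{1}{n})$ immediately. In particular, part~(ii) of Lemma~\ref{lem:sep_planar} is not used at all in the proof; only the existence of a constant-size separator with components smaller than $\frac{n}{3}$ (part~(i)) is needed.
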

\begin{proof}
Let $G=(V,E)$ be a planar graph on $n$ vertices and $K$ a sufficiently large constant such that Lemma~\ref{lem:sep_planar} holds. As above, denote by $\cal{C}$ the set of vertices of~$G$ with degree at least $\frac{n}{K}$ and denote by $E_{\cal{C}}$ the set of edges emanating from $\cal{C}$.
We distinguish two cases:\\

\noindent\textit{Case 1:} {$|E_{\cal{C}}| \leq (2-\frac{8}{K})n$}.
Consider the induced subgraph $G'$ of $G$ that is obtained from $G$ by removing all vertices of $\cal{C}$.
Take the embedding of Spielman and Teng for $G'$ and extend it to an embedding for $G$ by placing the vertices of $\cal{C}$ at the origin.
Now use Lemma~\ref{lem:embedding}, together with the number of vertices of $G'$ being $n-|\cal{C}|$.
We get that $\lambda_2(G)$ equals
$$\min_{\sum_{i=1}^{n} \vec{v_i} =\vec{0}} \frac{\sum_{(i,j) \in E} ||\vec{v_i} - \vec{v_j} ||^2 } {\sum_{i=1}^{n}{||\vec{v_i}||^2}}
\leq  \frac{\sum_{(i,j) \in E(G')} ||\vec{v_i} - \vec{v_j} ||^2} {n-|\cal{C}|} + \frac{\sum_{(i,j) \in E_{G,G'}} ||\vec{v_i} - \vec{v_j} ||^2 } {n-|\cal{C}|},$$
where $E(G')$ denotes the set of edges of $G'$ and $E_{G,G'}$ the set of edges connecting a vertex in~$G$ and a vertex in~$G'$.
Then, as in the proof of Theorem~\ref{thm:spielman-teng}, the first summand is at most $\frac{{\frac{8n}{K}}}{n-|\cal{C}|}$. For the second summand, observe that the graph $G=(V,E_{G,G'})$ by assumption has at most $|E_{\cal{C}}| \leq (2-\frac{8}{K})n$ edges, and these in our case have length one, so we get $$\lambda_2(G) \leq \frac{\frac{8n}{K}}{n-|\cal{C}|} + \frac{(2-\frac{8}{K})n}{n-|\cal{C}|}= \frac{2n}{n-|\cal{C}|}.$$ Since $|\cal{C}|$=$O(1)$, this last value yields $2+O(\frac{1}{n}),$ and this case is settled.\\

\noindent\textit{Case 2:} {$|E_{\cal{C}}| > (2-\frac{8}{K})n$}.
By Lemma~\ref{lem:sep_planar}, $G$ has a separator $X$ of constant size such that each component of $G-X$ has at most  $\frac{n}{3} \leq \frac{n-|X|}{2}$ vertices. We can then use Theorem~\ref{thm:separator} and the fact that
the graph $G''=(V,E_{X,G-X})$ is bipartite and planar, hence it has at most $2n-4$ edges. Thus, we get $\lambda_2(G) \leq \frac{2n-4}{n-|X|}$ which is $2+O(\frac1n)$ since $|X|$ is in $O(1)$.
\end{proof}

\subsection{Bipartite planar graphs}

We will first focus on the class of bipartite planar graphs with minimum vertex degree~$3$.
The smallest graph in this class is the $3$-cube, which has $\lambda_2=2$~\cite{fiedler}. Our result shows that for this class $\lambda_{2\max}$ is asymptotically close to~$1$ (with respect to the number of vertices). We will apply techniques similar to those in Case~$1$ of the proof of Theorem~\ref{thm:upper_planar}. However, an issue arising here is to count the number of edges connecting vertices of high degree with vertices of low degree. This is what makes not possible to just mimic the proof of Theorem~\ref{thm:upper_planar}.

Our upper bound on $\lambda_{2\max}$ for bipartite planar graphs with minimum vertex-degree~$3$ relies on the following lemma, which is trivially true for small values of~$k$ but will be used for $k\in\Omega(\sqrt{n})$.

\begin{lemma}\label{lem:bipartiteEdges}
Let $G$ be a maximal bipartite planar graph with $n$ vertices and minimum vertex-degree~$3$. Let $A$ be the set of vertices of $G$ with degree at least~$k$. Then the cardinality of the set $E_{A,G-A}$ of edges of $G$ with one endpoint in~$A$ and the other in~$G-A$ is at most
$$n + \left(\frac{4n-8}{k}\right)^2 + \frac{8n-16}{k}-8.$$
\end{lemma}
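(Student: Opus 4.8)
```latex
The plan is to work with a maximal bipartite planar graph $G$ on $n$ vertices, which has exactly $2n-4$ edges, and to bound $|E_{A,G-A}|$ by carefully analyzing how the high-degree vertices in $A$ interact with the planar structure. The first step I would take is to split the edges incident to $A$ into two types: edges with \emph{both} endpoints in $A$, call this set $E_{A,A}$, and edges with exactly one endpoint in $A$, which is precisely $E_{A,G-A}$. Since every edge of $G$ touching $A$ is counted once in $E_{A,G-A}$ and edges inside $A$ are counted (in a degree-sum sense) twice, the key identity is $\sum_{v\in A}\deg(v) = |E_{A,G-A}| + 2|E_{A,A}|$. The target bound on $|E_{A,G-A}|$ will therefore follow once I control both $\sum_{v\in A}\deg(v)$ from above and $|E_{A,A}|$ from below (or rather, show that the total contribution of $A$ to the edge count is not too large).

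The second step is to bound the size of $A$ itself. Since $G$ has $2n-4$ edges and hence degree-sum $2(2n-4)=4n-8$, and since each vertex of $A$ has degree at least $k$, we immediately get $|A| \le \frac{4n-8}{k}$. This is where the hypothesis $k\in\Omega(\sqrt n)$ matters: it forces $|A|$ to be small, of order $\sqrt n$ at most, so the subgraph $G[A]$ induced on the high-degree vertices is itself a small bipartite planar graph and thus has at most $2|A|-4 \le 2|A|$ edges. This gives $|E_{A,A}| \le 2|A| \le \frac{8n-16}{k}$, which explains the appearance of the term $\frac{8n-16}{k}$ in the claimed bound.

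The main obstacle, and the third step, is to bound $\sum_{v\in A}\deg(v)$, equivalently to bound the number of edges emanating from $A$ in a way that yields the leading term $n + \left(\frac{4n-8}{k}\right)^2$. The naive bound $\sum_{v\in A}\deg(v) \le 4n-8$ is far too weak. Instead I would exploit planarity of the \emph{bipartite} graph on the relevant vertex sets: consider edges of $G$ with one endpoint in $A$ and the other in $G-A$ as forming a bipartite planar graph $H$ between $A$ and its neighborhood. A key subtlety is that a vertex of $G-A$ may have many neighbors in $A$, but girth-$4$ (bipartiteness) restricts how many edges such a structure can support. The sharp tool here is that a bipartite planar graph with parts of sizes $p$ and $q$ has at most $2(p+q)-4$ edges, but one must be cleverer: I expect the squared term $(\frac{4n-8}{k})^2 = |A|^2$ to arise from counting, via planarity applied to $G[A]$ together with the common neighbors, the number of \emph{pairs} of vertices in $A$ that share a neighbor in $G-A$. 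Each such shared neighbor of two vertices $u,w\in A$ creates a $4$-cycle, and planarity bounds the number of internally disjoint such $4$-cycles. Since there are at most $\binom{|A|}{2} \le \frac{|A|^2}{2} \le \frac12\left(\frac{4n-8}{k}\right)^2$ pairs, and the number of low-degree neighbors adjacent to a given pair is controlled, the total contribution of the "many shared neighbors" phenomenon is $O(|A|^2)$, contributing the term $\left(\frac{4n-8}{k}\right)^2$.

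The final step is to assemble these estimates. Vertices of $G-A$ that have at most one neighbor in $A$ contribute at most $n$ edges to $E_{A,G-A}$ in total (at most one per vertex, and there are fewer than $n$ such vertices), yielding the leading $n$. Vertices of $G-A$ with two or more neighbors in $A$ contribute via the shared-neighbor count bounded in the previous step by $\left(\frac{4n-8}{k}\right)^2$ plus the correction terms, while edges internal to $A$ are absorbed by the $\frac{8n-16}{k}$ term. Collecting everything and tracking the Euler-formula constants (the $-4$'s and the final $-8$) gives exactly
$$
|E_{A,G-A}| \le n + \left(\frac{4n-8}{k}\right)^2 + \frac{8n-16}{k}-8,
$$
as claimed. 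I expect the shared-neighbor counting in step three to require the most care, since it is precisely the difficulty flagged before the lemma: controlling edges between high-degree and low-degree vertices without double-counting and while respecting the bipartite-planar constraint.
```
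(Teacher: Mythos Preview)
Your plan diverges from the paper's proof at the crucial step, and the route you sketch in step three does not actually close. You propose to bound the contribution of vertices $x\in G-A$ having at least two neighbours in $A$ by counting pairs $\{u,w\}\subset A$ that share a common neighbour, arguing that there are at most $\binom{|A|}{2}\le\tfrac12\bigl(\tfrac{4n-8}{k}\bigr)^2$ such pairs and that ``planarity bounds the number of internally disjoint $4$-cycles''. But in a bipartite planar graph a fixed pair $u,w$ on the same side of the bipartition can have arbitrarily many common neighbours ($K_{2,m}$ is planar for every $m$), so the number of triples $(u,w,x)$ is not controlled by $\binom{|A|}{2}$, and no bound of order $|A|^2$ on $\sum_{x:d_A(x)\ge 2} d_A(x)$ follows from this count. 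A further warning sign is that your outline never invokes the hypothesis that the minimum degree is $3$; without it the lemma is false for the same reason (take $K_{2,n-2}$).

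The paper's argument is structurally different. It does not try to upper-bound $|E_{A,G-A}|$ directly; instead it lower-bounds $|E_{G-A,G-A}|$ and subtracts from the total $2n-4$. The mechanism is geometric: in a quadrangulation, deleting a vertex $v\in A$ leaves a face bounded by a cycle of length $2\deg(v)$, and at most $|A|$ vertices of that cycle can lie in $A$, so at least $2\deg(v)-2|A|$ of its edges lie in $E_{G-A,G-A}$. Summing over $v\in A$ and controlling the overcount (each link edge is counted at most four times, with the excess charged to edges of $E_{A,A}$, of which there are at most $2|A|-4$) yields $|E_{G-A,G-A}|\ge \sum_{v\in A}\deg(v)-|A|^2-(2|A|-4)$. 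After assuming $\sum_{v\in A}\deg(v)\ge n$ (the statement is trivial otherwise) and substituting $|A|\le\tfrac{4n-8}{k}$, the claimed bound follows. If you want to salvage your approach, you would need an independent argument that the bipartite planar graph on $A\cup\{x:d_A(x)\ge 2\}$ has at most $|A|^2+O(|A|)$ edges, and I do not see one that avoids something equivalent to the link-cycle count.
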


\begin{proof}
First, note that maximal bipartite planar graphs on $n\geq 4$ vertices are quadrangulations and have $2n-4$ edges.
We can assume that $\sum_{v \in A} \deg{(v)} \geq n$, as otherwise there are less than~$n$ edges between $A$ and $G-A$.
Call a vertex $v \in G$ a \emph{high-degree vertex} if $\deg{(v)} \geq k$, and a \emph{low-degree vertex} otherwise.
Then $A$ is the set of vertices of $G$ of high degree and $G-A$ that of vertices of low degree.

We are going to show the following bound on the cardinality of the set $E_{G-A,G-A}$ of edges with both endpoints in $G-A$;
$$|E_{G-A,G-A}| \geq n- \left(\frac{4n-8}{k}\right)^2 - \left(\frac{8n-16}{k}-4\right)$$
which, together with $G$ having $2n-4$ edges, immediately implies the result.

Let us point out that
$$|A| \leq \frac{4n-8}{k}\ (\star)\qquad \mbox{and} \qquad |E_{A,A}| \leq 2|A|-4\ (\star\star),$$
where $E_{A,A}$ denotes the set of edges with both endpoints in~$A$, $(\star)$ follows from the definition of~$A$ and $(\star\star)$ from the subgraph of~$G$ induced by~$A$ being bipartite and planar.

We now consider a plane embedding of~$G$. Each face of~$G$ is bounded by four edges. For any vertex $v \in A$, removing~$v$ and its incident edges from the quadrangulation~$G$ gives a face bounded by a cycle of $2\deg(v) \geq 2k$ edges. Because of~$(\star)$, at most $\frac{4n-8}{k}$ vertices of this cycle belong to~$A$. We thus find at least $2\deg(v)- 2\frac{4n-8}{k}$ edges of this cycle that belong to $E_{G-A,G-A}$.

Repeating this counting process for each vertex of $A$ leads to at least
$$2\sum_{v \in A}  \deg{(v)}  - 2\left(\frac{4n-8}{k}\right)^2$$
edges for~$E_{G-A,G-A}$, but an edge $e$ of~$E_{G-A,G-A}$  might be counted up to four times, once for each high-degree vertex incident to a face of $G$ containing $e$ (see Figure~\ref{fig:graugran4aresta}).
\begin{figure}[h]
	\centering
		\includegraphics[width=0.6\textwidth]{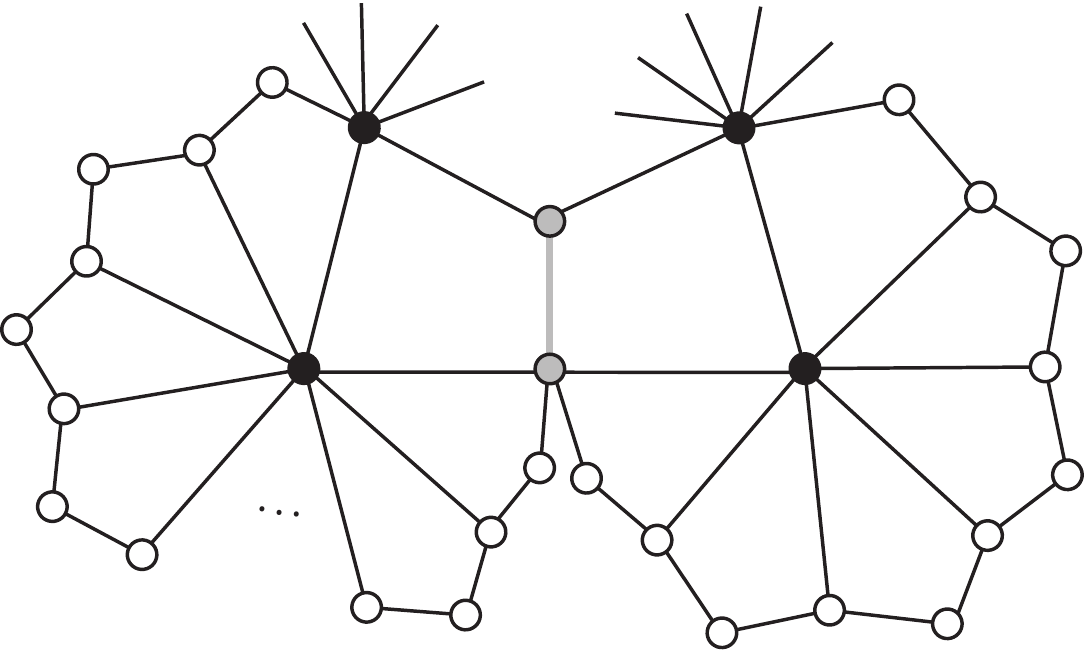}
	\caption{An edge (gray) is counted for up to four high-degree vertices (black).}
	\label{fig:graugran4aresta}
\end{figure}

If $e$ is counted three times, then it is counted for two adjacent vertices $x$ and $y$ of~$A$ and we can charge~$e$ to the edge $xy$ of~$E_{A,A}$. Likewise, if $e$ is counted four times, we charge it to two edges of~$E_{A,A}$. Observe that an edge $e$ of $E_{G-A,G-A}$ is charged to at most two edges of $E_{A,A}$. Also observe that an edge of $E_{A,A}$ gets a charge of at most two edges of $E_{G-A,G-A}$, because no edge is incident to more than two faces.

By~$(\star\star)$ we conclude that, in the counting process above, at most $2(2|A|-4)$ edges of~$E_{G-A,G-A}$ have been counted more than twice. Hence, $$|E_{G-A,G-A}| \geq \frac{1}{2}\left(2\sum_{v \in A}  \deg{(v)} - 2\left(\frac{4n-8}{k}\right)^2 - 2\left(2\frac{4n-8}{k}-4\right)\right).$$ Since we were assuming $\sum_{v \in A} \deg{(v)} \geq n$, we get
$$ |E_{G-A,G-A}| \geq n- \left(\frac{4n-8}{k}\right)^2 - \left(\frac{8n-16}{k}-4\right)$$
as desired.
\end{proof}

\begin{theorem}
\label{thm:upper_bipartite}
For the class of bipartite graphs with minimum vertex-degree $3$, $\lambda_{2\max}\leq1+O(\frac{1}{n^{1/3}}).$
\end{theorem}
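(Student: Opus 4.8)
The plan is to mimic Case~1 of the proof of Theorem~\ref{thm:upper_planar}, but with the threshold degree $k$ left as a free parameter to be optimized at the very end, and with Lemma~\ref{lem:bipartiteEdges} playing the role that the crude edge count played there. First I would reduce to the maximal case: given a bipartite planar graph $G$ on $n\geq 4$ vertices with minimum degree~$3$, add edges (respecting bipartiteness and planarity) until $G$ becomes a quadrangulation $\bar G$, which still has minimum degree at least~$3$ and exactly $2n-4$ edges. Since $L(\bar G)$ differs from $L(G)$ by a sum of positive semidefinite single-edge Laplacians, the Courant--Fischer characterization gives $\lambda_2(G)\leq\lambda_2(\bar G)$, so it suffices to bound $\lambda_2(\bar G)$.

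Fix $k$ and let $A$ be the set of vertices of $\bar G$ of degree at least~$k$; the induced subgraph $\bar G-A$ is planar with maximum degree less than~$k$. I would apply the Spielman--Teng construction underlying Theorem~\ref{thm:spielman-teng} to $\bar G-A$, obtaining one point on the unit sphere per vertex of $\bar G-A$, with centroid at the origin and $\sum_{(i,j)\in E(\bar G-A)}\|\vec{v}_i-\vec{v}_j\|^2\leq 8(k-1)$. Extend this to an embedding of all of $\bar G$ by placing every vertex of $A$ at the origin. The centroid condition $\sum_i\vec{v}_i=\vec{0}$ survives, since the $\bar G-A$ part already sums to zero and the added vectors are zero, so the Embedding Lemma~\ref{lem:embedding} applies. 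Splitting the edge sum into three types, edges inside $A$ have length~$0$, edges inside $\bar G-A$ contribute at most $8(k-1)$, and each of the $|E_{A,\bar G-A}|$ edges between $A$ and $\bar G-A$ has squared length exactly~$1$ (one endpoint at the origin, the other on the unit sphere). Since exactly $|A|$ vertices lie at the origin, the denominator is $\sum_i\|\vec{v}_i\|^2=n-|A|$, giving
\[
\lambda_2(\bar G)\leq\frac{8(k-1)+|E_{A,\bar G-A}|}{n-|A|}.
\]
Lemma~\ref{lem:bipartiteEdges} bounds $|E_{A,\bar G-A}|$ by $n+(4n/k)^2+(8n/k)-8$, and $(\star)$ gives $|A|\leq(4n-8)/k$.

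The last step is to choose $k$ to balance the two genuinely growing error terms: the $8k$ coming from the Spielman--Teng part and the $(4n/k)^2=16n^2/k^2$ coming from the edge count. These equalize (up to constants) at $k=\Theta(n^{2/3})$, which makes both of order $n^{2/3}$, while $8n/k$ and $|A|$ drop to order $n^{1/3}$. With this choice the numerator is $n+O(n^{2/3})$ and the denominator is $n-O(n^{1/3})$, so that $\lambda_2(\bar G)\leq(n+O(n^{2/3}))/(n-O(n^{1/3}))=1+O(n^{-1/3})$, and the same bound transfers to $\lambda_2(G)$.

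The point requiring care—and the reason the proof does not simply copy that of Theorem~\ref{thm:upper_planar}—is that the whole argument hinges on a sharp count of the edges between high- and low-degree vertices. The naive bound would charge all $2n-4$ edges of the quadrangulation and only reproduce a ``$2+\cdots$'' estimate; it is precisely the refined bound $|E_{A,\bar G-A}|\leq n+(4n/k)^2+(8n/k)-8$ of Lemma~\ref{lem:bipartiteEdges}, whose leading term is $n$ rather than $2n$, that yields the desired ``$1+\cdots$''. I therefore expect the real obstacle to be encapsulated in that lemma (a quadrangulation face-counting argument with a charging scheme for multiply-counted edges), while everything remaining in the theorem itself is the routine optimization of $k$ sketched above.
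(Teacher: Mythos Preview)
Your proposal is correct and essentially identical to the paper's proof: both reduce to a maximal bipartite planar graph (a quadrangulation), place the high-degree set $A$ at the origin and $G-A$ on the unit sphere via the Spielman--Teng embedding, bound $|E_{A,G-A}|$ by Lemma~\ref{lem:bipartiteEdges}, and optimize at $k=n^{2/3}$. The only cosmetic difference is that the paper first separates off the easy sub-case $\sum_{v\in A}\deg(v)\leq n$ (where the trivial bound $|E_{A,G-A}|\leq n$ and the choice $k=\sqrt{n}$ already give $1+O(n^{-1/2})$), but since the statement of Lemma~\ref{lem:bipartiteEdges} covers that case as well, your single-case presentation is equally valid.
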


\begin{proof}
Since $\lambda_2$ does not decrease when adding edges to a graph~\cite{fiedler}, we can assume that the graphs considered have the maximum number of edges. Hence, let $G$ be a maximal bipartite planar graph with~$n$ vertices and minimum vertex-degree~$3$.

We can assume that $G$ has a vertex of degree at least~$\frac{n}{8}$, since otherwise Theorem~\ref{thm:spielman-teng} directly gives $\lambda_2 \leq 1$. The proof is now analogous to that of Case~$1$ in Theorem~\ref{thm:upper_planar}: Let $A$ be the set of vertices of~$G$ with degree at least~$k$, then place the vertices of~$A$ at the origin and those of~$G-A$ on the unit sphere by using the embedding of Theorem~\ref{thm:spielman-teng}. Two cases arise:\\

\noindent\textit{Case 1:} {$\sum_{v \in A} \deg{(v)} \leq n$}.
Then the Embedding Lemma~\ref{lem:embedding} gives
$$\lambda_2(G) \leq \frac{8k}{n-|A|}+\frac{n}{n-|A|} \leq \frac{8k+n}{n-\frac{4n-8}{k}}
= \frac{8k^2 + n k}{n k-4n+8}$$
and choosing $k=\sqrt{n}$ we obtain $\lambda_2 \leq 1+O\left(\frac{1}{\sqrt{n}}\right).$

\noindent\textit{Case 2:} {$\sum_{v \in A} \deg{(v)} > n$}.
Using Lemma~\ref{lem:bipartiteEdges} we get that
$$|E_{A,G-A}| \leq  n + \left(\frac{4n-8}{k}\right)^2 + \frac{8n-16}{k}-8.$$
The Embedding Lemma~\ref{lem:embedding} then gives
$$\lambda_2(G) \leq \frac{8k}{n-|A|}+\frac{n + \left(\frac{4n-8}{k}\right)^2 + \frac{8n-16}{k}-8}{n-|A|}$$
$$\leq \frac{8k}{n-\frac{4n-8}{k}}+\frac{n + \left(\frac{4n-8}{k}\right)^2 + \frac{8n-16}{k}-8}{n-\frac{4n-8}{k}}$$
$$= \frac{8k^2+nk + \frac{(4n-8)^2}{k} + 8n-16 -8k}{nk-4n+8}.$$
By choosing $k=n^{2/3}$
we get $\lambda_2 \leq 1+O(\frac{1}{n^{1/3}}).$
\end{proof}

Note that the condition on the minimal degree is essential. Indeed, whenever a graph $G$ has a vertex of degree $2$, then $\lambda_2(G) \leq 2$, because the vertex-connectivity is an upper bound on $\lambda_2$~\cite{fiedler}. Also note that the complete bipartite graph $K_{2,n}$ is planar and $\lambda_2(K_{2,n}) =2$. Therefore we obtain the following corollary for bipartite planar graphs and $n$ large:
\begin{corollary}
For the class of bipartite planar graphs and for $n$ large, $\lambda_{2\max}=2.$
\end{corollary}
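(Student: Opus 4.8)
The plan is to establish the corollary as a direct consequence of Theorem~\ref{thm:upper_bipartite} together with two elementary observations about graphs containing vertices of small degree. The key point is that dropping the minimum-degree-$3$ hypothesis cannot increase $\lambda_{2\max}$ beyond $2$, and that the value $2$ is actually attained by a planar bipartite graph, so the bound is tight. First I would recall the standard fact~\cite{fiedler} that $\lambda_2(G)$ is at most the vertex-connectivity $\kappa(G)$ of $G$. If a bipartite planar graph $G$ has minimum degree $\delta(G) \le 2$, then $\kappa(G) \le \delta(G) \le 2$, and hence $\lambda_2(G) \le 2$ immediately. Thus any bipartite planar graph with a vertex of degree at most $2$ already satisfies the desired bound.

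The remaining case is a bipartite planar graph $G$ with minimum vertex-degree at least $3$. Here I would invoke Theorem~\ref{thm:upper_bipartite}, which gives $\lambda_2(G) \le 1 + O(n^{-1/3})$. For $n$ sufficiently large this quantity is strictly below $2$, so in particular $\lambda_2(G) \le 2$ holds for all large $n$. Combining the two cases, every bipartite planar graph on $n$ vertices (for $n$ large) has $\lambda_2 \le 2$, which yields $\lambda_{2\max} \le 2$.

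To complete the argument I must exhibit a bipartite planar graph achieving $\lambda_2 = 2$, showing $\lambda_{2\max} \ge 2$. The natural candidate is the complete bipartite graph $K_{2,n-2}$, which is planar (it is a subgraph of a maximal planar bipartite graph, and $K_{2,m}$ is easily drawn without crossings) and bipartite by construction. I would verify that $\lambda_2(K_{2,m}) = 2$ by a short Laplacian eigenvalue computation: the spectrum of $K_{2,m}$ can be read off directly from its structure, the relevant eigenvalue being $2$, which can be checked either by constructing an explicit eigenvector supported on the two-vertex side, or by appealing to the known spectrum of complete bipartite graphs. Since $K_{2,n-2}$ has $n$ vertices, this provides the matching lower bound.

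The only subtlety, and thus the main obstacle, is the interplay between the asymptotic nature of Theorem~\ref{thm:upper_bipartite} and the exact equality $\lambda_{2\max} = 2$. The upper bound from the minimum-degree-$3$ regime is $1 + O(n^{-1/3})$, which is below $2$ only for large $n$; this is precisely why the statement is phrased ``for $n$ large.'' One must therefore be careful that the value $2$ is forced entirely by the low-degree case (via the vertex-connectivity bound) and realized by $K_{2,n-2}$, rather than by the high-minimum-degree graphs, whose Fiedler values actually tend toward $1$. Once this separation of regimes is made explicit, the equality $\lambda_{2\max} = 2$ follows cleanly, and no delicate estimation is required beyond the eigenvalue check for $K_{2,n-2}$.
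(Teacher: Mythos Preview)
Your proposal is correct and follows essentially the same argument as the paper: split into the cases $\delta(G)\le 2$ (handled via Fiedler's bound $\lambda_2\le\kappa(G)$) and $\delta(G)\ge 3$ (handled via Theorem~\ref{thm:upper_bipartite}), and use $K_{2,n-2}$ for the matching lower bound. The paper's justification, given in the paragraph immediately preceding the corollary, is identical in structure and content.
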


\subsection{Outerplanar graphs}

The following well-known lemma will allow us to use Theorem~\ref{thm:separator}.

\begin{lemma}\label{lem:tree}
Every tree $T$ on $n$ vertices contains a separator $X$ consisting of only one vertex such that each connected component of $T-X$ has at most $\frac{n}{2}$ vertices.
\end{lemma}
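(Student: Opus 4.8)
The plan is to locate a \emph{centroid} of the tree: a single vertex whose removal leaves only small components. I would proceed by a constructive walk argument. For a vertex $v$, let $w(v)$ denote the number of vertices in the largest connected component of $T-v$; the goal is to exhibit a vertex $v$ with $w(v)\le\frac{n}{2}$, since then $X=\{v\}$ is exactly the separator required by the statement.

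The first fact I would record is a uniqueness property. The components of $T-v$ partition the remaining $n-1$ vertices, so their sizes sum to $n-1$. Consequently at most one component can have more than $\frac{n}{2}$ vertices, since two such components would already account for more than $n$ vertices. Thus, whenever $w(v)>\frac{n}{2}$, there is a \emph{unique} heavy component $C$, and hence a unique neighbor $u$ of $v$ lying in $C$.

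Starting from an arbitrary vertex, I would repeatedly apply the following step: if the current vertex $v$ satisfies $w(v)\le\frac{n}{2}$, stop; otherwise move to the neighbor $u$ of $v$ inside the unique heavy component $C$. The key estimate is that this step strictly decreases the size of the heavy component. Indeed, among the components of $T-u$, the one containing $v$ consists of all of $V\setminus C$ and hence has size $n-|C|<\frac{n}{2}$, while every other component of $T-u$ is contained in $C\setminus\{u\}$ and therefore has fewer than $|C|$ vertices. So if $u$ still has a heavy component at all, its size is strictly smaller than $|C|$.

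Since the size of the heavy component is a positive integer that strictly decreases at each step, the walk terminates after finitely many moves, and it can only halt at a vertex $v$ with $w(v)\le\frac{n}{2}$, which is precisely the separator we seek. (Equivalently, one may phrase this as an extremal argument: a vertex minimizing $w(\cdot)$ must satisfy $w(v)\le\frac{n}{2}$, for otherwise the neighbor in its heavy component would have a strictly smaller value of $w$, a contradiction.) The only point requiring care is the uniqueness of the heavy component, which guarantees that the walk never branches and is what makes the monotone decrease well defined; everything else is an elementary size count, so I expect no substantial obstacle beyond setting up this monotonicity cleanly.
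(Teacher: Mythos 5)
Your proof is correct and complete. Note that the paper itself offers no proof of Lemma~\ref{lem:tree}: it is stated as a well-known fact (it is the classical existence theorem for a centroid of a tree, going back to Jordan), so there is no argument in the paper to compare against; your write-up supplies exactly the standard proof, built on the three ingredients one expects — uniqueness of any component of $T-v$ with more than $\frac{n}{2}$ vertices, strict decrease of the heavy component's size when one steps into it, and termination by integrality. One small point of bookkeeping: the uniqueness of the neighbor $u$ of $v$ inside the heavy component $C$ is not a consequence of the uniqueness of $C$ (as your ``hence'' suggests), but of acyclicity — if $v$ had two neighbors in the connected subgraph $C$, then $T$ would contain a cycle. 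This same fact is what guarantees that the component of $T-u$ containing $v$ is exactly $V\setminus C$ (every path from $v$ into $C$ must use the edge $vu$), which is the step your size estimate $n-|C|<\frac{n}{2}$ relies on. With that reading, every step is sound, and either of your two formulations (the walk or the vertex minimizing $w(\cdot)$) yields the separator claimed in the lemma.
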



\begin{theorem}
\label{thm:upper_outerplanar}
For the class of outerplanar graphs, $\lambda_{2\max}\leq 1+O(\frac{1}{n})$.
\end{theorem}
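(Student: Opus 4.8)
The plan is to reduce the problem to maximal outerplanar graphs and then extract a balanced separator of constant size from the weak dual tree, so that Theorem~\ref{thm:separator} can be applied. Since $\lambda_2$ does not decrease when edges are added, I may assume that $G$ is a \emph{maximal} outerplanar graph, that is, a triangulation of a polygon; such a graph on $n$ vertices has exactly $2n-3$ edges and $n-2$ triangular inner faces. Its weak dual $T$---with one node per inner triangle and an edge between two nodes whenever the corresponding triangles share a diagonal---is a tree on $n-2$ nodes. This is precisely where Lemma~\ref{lem:tree} comes in.

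First I would apply Lemma~\ref{lem:tree} to $T$ to obtain a centroid: either a single triangle or, in the perfectly balanced case, two adjacent triangles. Let $X$ be the set of vertices of this triangle (or of these two triangles), so that $|X|\le 4=O(1)$. The components of $G-X$ are then exactly the sub-polygons lying beyond the diagonals of the central triangle(s), each with its two diagonal endpoints (which lie in $X$) deleted; the remaining vertices of such a sub-polygon form a path along the polygon boundary, so each component is connected and the number of its vertices equals the number of triangles in the corresponding component of $T$ after removal of the centroid. The centroid property of Lemma~\ref{lem:tree} guarantees that each of these counts is at most $\frac{n-|X|}{2}$, which is exactly the hypothesis required by Theorem~\ref{thm:separator}.

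It then remains to bound $|E_{X,G-X}|$, and the goal is $|E_{X,G-X}|\le n+O(1)$, since Theorem~\ref{thm:separator} would then give $\lambda_2(G)\le\frac{n+O(1)}{n-O(1)}=1+O(\frac1n)$. Because $|E_{X,G-X}|\le\sum_{v\in X}\deg(v)$, it suffices to bound the degree sum. Writing $\sum_{v\in X}\deg(v)=\sum_{x\in V}|N(x)\cap X|$ and bounding, for each $x$ with $|N(x)\cap X|\ge 1$, the quantity $|N(x)\cap X|$ by $1+\binom{|N(x)\cap X|}{2}$, one obtains
\[
\sum_{v\in X}\deg(v)\ \le\ \Big|\bigcup_{v\in X}N(v)\Big|\ +\ \sum_{\{u,v\}\subseteq X}|N(u)\cap N(v)|\ \le\ n+2\binom{|X|}{2}.
\]
The key structural input is that an outerplanar graph contains no $K_{2,3}$, so any two vertices share at most two common neighbours; this is what bounds each term $|N(u)\cap N(v)|$ by $2$ and makes the second sum $O(1)$. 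Hence $\sum_{v\in X}\deg(v)\le n+O(1)$, and the desired bound on $|E_{X,G-X}|$ follows.

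I expect the crossing-edge estimate to be the main obstacle. The centroid triangle may well contain a vertex of degree $\Theta(n)$ (think of the apex of a fan), so a crude count of $|E_{X,G-X}|$ would only yield $\lambda_2\lesssim 2$; it is precisely the $K_{2,3}$-freeness of outerplanar graphs, forcing the neighbourhoods of the separator vertices to be almost disjoint, that recovers the leading constant~$1$. A minor additional point is the integrality of the balance condition: taking a single centroid node gives $|X|=3$, whereas the two-centroid case needs both triangles ($|X|=4$) to keep every component within $\frac{n-|X|}{2}$; in either case the constants are harmless and get absorbed into the $O(\frac1n)$ term.
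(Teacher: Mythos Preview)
Your proposal is correct and follows essentially the same approach as the paper: pass to a maximal outerplanar graph, take a centroid of the weak dual tree, use the corresponding triangle(s) as a constant-size separator, bound the crossing edges via $K_{2,3}$-freeness, and apply Theorem~\ref{thm:separator}. The only cosmetic differences are that in the perfectly balanced case the paper takes $X$ to be the two endpoints of the shared diagonal (so $|X|=2$) rather than the four vertices of two adjacent triangles, and the paper bounds $|E_{X,G-X}|$ by an ad-hoc count in each case rather than your inclusion--exclusion style estimate $\sum_{v\in X}\deg(v)\le n+2\binom{|X|}{2}$; both variants yield the same $1+O(1/n)$.
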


\begin{proof}
Let $G$ be an outerplanar graph. Again, we can assume that $G$ has the maximum possible number of edges, since $\lambda_2$ does not decrease when adding edges to a graph~\cite{fiedler}.

Such a graph has a plane drawing as a subdivision of a convex polygon into triangles. Consider then the dual graph of this drawing of $G$, in which there is a vertex for each interior face of $G$ and two vertices of the dual graph are adjacent if two faces of $G$ share an edge. This dual graph is a binary tree $T$ with $n-2$ vertices. By Lemma~\ref{lem:tree}, $T$ contains a vertex $v$, such that $T-\{v\}$ is split into two or three components with at most $\frac{n-2}{2}$ vertices each.  Let $a,b$ and $c$ denote the three vertices of the face of $G$ corresponding to $v \in T$. The following cases arise:\\

\noindent\textit{Case 1:} A component of $T-\{v\}$ contains exactly $\frac{n-2}{2}$ vertices.\\
We can assume that $ab$ is the edge in~$G$ crossed by the edge of $T$ connecting $v$ with this large component of $T-\{v\}$. Then, we take $X=\{a,b\}$ as a separator for $G$. Both components of $G-X$ have $\frac{n-2}{2}$ vertices. Since $G$ is outerplanar, it contains no~$K_{2,3}$ and, therefore, at most two vertices of $G-X$ are adjacent to both $a$ and~$b$. It follows that the number of edges connecting $X$ to~$G-X$ is at most~$n$.
Hence, Theorem~\ref{thm:separator} gives $\lambda_2(G) \leq \frac{n}{n-2} = 1+O(\frac{1}{n}).$\\

\noindent\textit{Case 2:} No component of $T-\{v\}$ contains exactly $\frac{n-2}{2}$ vertices.\\
Then we take $X=\{a,b,c\}$ as a separator for $G$. Each component of $G-X$ has at most $\frac{n-3}{2}$ vertices.
Since $G$ is outerplanar, it contains no~$K_4$ nor~$K_{2,3}$.
Therefore, no vertex of $G-X$ is adjacent to all three vertices $a,b,c$ and for any two vertices $x,y\in\{a,b,c\}$ at most one vertex of $G-X$ is adjacent to both $x$ and~$y$. Consequently, the number of edges connecting $X$ with $G-X$ is at most $n$.
Hence, Theorem~\ref{thm:separator} in this case gives $\lambda_2(G) \leq \frac{n}{n-3} = 1+O(\frac{1}{n}).$
\end{proof}

\subsection{Graphs of bounded genus}\label{sec:genus}

For the class of graphs of bounded genus, the following result on the Fiedler value is known:

\begin{theorem}[Kelner~\cite{kelner06}]\label{thm:kelner06}
Let $G$ be a graph of genus $g$ on $n$ vertices and with bounded degree. Then, the Fiedler value of $G$ satisfies
$\lambda_2(G) \leq O(\frac{g}{n})$.
\end{theorem}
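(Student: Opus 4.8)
The plan is to generalize the Spielman--Teng argument behind Theorem~\ref{thm:spielman-teng} from the sphere (genus~$0$) to a surface of genus~$g$, extracting an extra multiplicative factor of~$g$ along the way. Recall that for planar graphs one uses Koebe's kissing-disk representation on the sphere, recenters it so the centroid of the vertex images is the origin, and then reads off the bound from the Embedding Lemma~\ref{lem:embedding}. My first step would be to reproduce the representation part: by the Koebe--Andreev--Thurston circle-packing theorem for surfaces, a graph $G$ of genus~$g$ admits a primal--dual circle packing on a surface $S$ of genus~$g$ carrying a metric of constant curvature (flat if $g=1$, hyperbolic if $g\geq 2$), in which each vertex is a disk and adjacent vertices correspond to tangent disks. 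This gives a geometric embedding in which edge lengths are controlled by the radii of the packing.

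The crucial new ingredient, and the step I expect to be the main obstacle, is transferring this representation from $S$ to the sphere while keeping control of $\sum_{(i,j)\in E}\|\vec{v}_i-\vec{v}_j\|^2$ and arranging $\sum_i \vec{v}_i=\vec{0}$. On the sphere the centering is achieved by a topological/M\"obius argument (a hairy-ball or Brouwer-degree type statement guaranteeing that some conformal automorphism sends the centroid to the origin). For higher genus there is no degree-one conformal map to the sphere, and the handles must be dealt with: I would cut the $g$ handles, or pass to a branched cover of degree $O(g)$ controlled by Riemann--Hurwitz, to reduce to a spherical picture, at the cost of duplicating the total squared edge length $O(g)$ times. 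This is exactly where the factor $g$ enters, and making the cutting clean --- so that the centering argument survives and the edge-length bookkeeping only loses a factor $O(g)$ --- is the delicate part.

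Once the vertices are placed as points $\vec{v}_i$ on the unit sphere in $\R^3$ with $\sum_i\vec{v}_i=\vec{0}$ and $\|\vec{v}_i\|=1$, the rest is routine. Each edge corresponds to (nearly) tangent circles, so its squared chordal length is bounded in terms of the sum of the corresponding radii; summing over all edges and using that the total spherical area covered by the packing is bounded, together with the degree bound $\deg(v)\leq\Delta$, should yield $\sum_{(i,j)\in E}\|\vec{v}_i-\vec{v}_j\|^2 = O(g\,\Delta)$. Since $\|\vec{v}_i\|=1$ gives $\sum_i\|\vec{v}_i\|^2=n$, the Embedding Lemma~\ref{lem:embedding} then immediately returns $\lambda_2(G)\leq O\!\left(\frac{g\,\Delta}{n}\right)$, which is $O\!\left(\frac{g}{n}\right)$ under the bounded-degree hypothesis. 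In short, the hard work is entirely in the genus-reduction and centering step; the Rayleigh-quotient estimate is then a direct analogue of the planar computation.
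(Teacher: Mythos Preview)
This theorem is not proved in the paper at all: it is quoted as Kelner's result~\cite{kelner06}, and the paper only offers a one-line summary of Kelner's argument (``after an embedding on a surface of genus~$g$, and a suitable analytic mapping, there exists an embedding of the vertices on the unit sphere, whose coordinates sum up to~$0$''). So there is no ``paper's own proof'' to compare against, only this brief description.

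Your sketch follows that description in spirit---circle packing on a genus-$g$ surface, map to the sphere, then invoke Lemma~\ref{lem:embedding}---and the endgame is indeed routine once the embedding is in place. The place where your proposal diverges from Kelner's actual argument, and where it is genuinely incomplete, is the ``genus-reduction and centering step.'' Cutting the $g$ handles combinatorially does not by itself produce a conformal (or even quasiconformal with controlled dilatation) map to the sphere, so you lose control of how the tangency structure of the packing, and hence the edge-length bounds, survives the passage. Kelner does not cut handles; he uses the complex-analytic fact (via Riemann--Roch, not Riemann--Hurwitz, which only constrains rather than constructs) that a compact Riemann surface of genus~$g$ admits a nonconstant meromorphic function to the sphere of degree at most $g+1$. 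Composing the circle-packing embedding with this holomorphic branched cover is what yields the spherical picture with the $O(g)$ loss; the ``suitable analytic mapping'' in the paper's summary refers to exactly this. The centering step then needs a separate fixed-point argument on top of this composition. So your outline is on the right track, but the concrete mechanism you propose for the key step would not work as stated; you would need to replace the handle-cutting by the analytic branched-cover construction to make the bookkeeping go through.
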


Moreover, inspecting the proof of~\cite{kelner06}, after an embedding on a surface of genus~$g$, and a suitable analytic mapping,
there exists an embedding of the vertices on the unit sphere, whose coordinates sum up to~$0$, attaining the bound $\lambda_2(G) \leq O(\frac{g}{n})$. Thus, we could use the Embedding Lemma~\ref{lem:embedding} to bound the Fiedler value of a bounded genus graph $G$ by placing vertices of high degree in the origin and using the embedding of ~\cite{kelner06} for the induced subgraph of vertices of bounded degree. This gives a bound $\lambda_{2\max} \leq 2+o(1)$ for the class of genus $g$ graphs~\cite{eurocomb}. Instead, we can go a bit further using Theorem~\ref{thm:separator} together with the following separator theorem for genus~$g$ graphs.

\begin{theorem}[Gilbert-Hutchinson-Tarjan~\cite{gilbert}]\label{thm:gilbert}
A graph of genus $g$ with $n$ vertices has a set of at most $6\sqrt{gn}+2\sqrt{2n}+1$ vertices whose removal leaves no component with more than $2n/3$ vertices.
\end{theorem}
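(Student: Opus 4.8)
The plan is to reduce the genus-$g$ case to the planar case, where the Lipton--Tarjan planar separator theorem already supplies a separator of size at most $2\sqrt{2n}$ whose removal leaves components of at most $2n/3$ vertices. Concretely, I would first produce a small \emph{planarizing set} $P$ of vertices whose deletion turns $G$ into a planar graph $G-P$, then apply the planar separator theorem to $G-P$, and finally output $P$ together with the resulting planar separator $S$ as the desired set. Since deleting $P$ can only split components further, every component of $G-(P\cup S)$ is contained in a component of $(G-P)-S$ and therefore has at most $2(n-|P|)/3\leq 2n/3$ vertices, so the component-size guarantee is inherited from the planar step. The total size is $|P|+2\sqrt{2(n-|P|)}+1\leq |P|+2\sqrt{2n}+1$, so the whole argument hinges on showing $|P|\leq 6\sqrt{gn}$.

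To build $P$ I would fix a cellular embedding of $G$ on the orientable surface of genus $g$ and exploit the fact that its first homology has rank $2g$: cutting the surface along a system of $2g$ generating cycles turns it into a disk, hence planarizes the graph, so it suffices to realize such a generating system by \emph{short} cycles of $G$ and take $P$ to be the union of their vertices. To keep the cycles short I would reduce the genus one handle at a time, repeatedly invoking a short-cycle lemma: a graph of genus $g'\geq 1$ on at most $n$ vertices contains a noncontractible cycle of length $O(\sqrt{n/g'})$. Cutting along such a cycle decreases the genus by one while adding only $O(\sqrt{n/g'})$ vertices to $P$; summing $\sum_{g'=1}^{g}O(\sqrt{n/g'})$ over the $g$ cuts (equivalently, taking $2g$ cycles each of length about $3\sqrt{n/g}$, since $2g\cdot 3\sqrt{n/g}=6\sqrt{gn}$) yields the target bound.

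The short-cycle lemma is the crux, and I expect it to be the main obstacle. I would prove it by running a breadth-first search from an arbitrary root and analyzing the level sets $L_0,L_1,\ldots$: each nontree edge together with two root-paths in the BFS tree forms a fundamental cycle, and since $G$ is nonplanar of genus $g'$, homology forces at least one such cycle to be noncontractible. The delicate point is the length bound, because a naive fundamental cycle can be as long as twice the BFS depth; here a pigeonhole/averaging argument over the level sizes $|L_i|$ (using $\sum_i|L_i|=n$ together with the $g'$ handles) is needed to locate a window of consecutive levels of total size $O(\sqrt{n/g'})$ within which a noncontractible cycle must live, its existence being forced by Euler's formula for genus $g'$. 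Avoiding a spurious logarithmic factor in this length estimate is precisely what makes the constant $6$ subtle; once the length control is secured, the iterated genus reduction and the final assembly with Lipton--Tarjan are routine, and careful bookkeeping of constants produces exactly $6\sqrt{gn}+2\sqrt{2n}+1$.
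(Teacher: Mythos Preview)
The paper does not prove this theorem at all: it is quoted verbatim from Gilbert, Hutchinson, and Tarjan~\cite{gilbert} and used as a black box in the proof of Theorem~\ref{thm:upper_genus2}. There is therefore no ``paper's own proof'' to compare your attempt against.

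For what it is worth, your outline is faithful to the strategy of the original Gilbert--Hutchinson--Tarjan paper: planarize by iteratively cutting along short noncontractible cycles (reducing the genus one handle at a time), then invoke Lipton--Tarjan on the resulting planar graph. Your identification of the short-cycle lemma as the crux is correct, and the arithmetic $2g\cdot 3\sqrt{n/g}=6\sqrt{gn}$ is the right back-of-the-envelope for the constant. The part you flag as delicate---getting a noncontractible cycle of length $O(\sqrt{n/g'})$ without a logarithmic loss---is exactly where the work lies in~\cite{gilbert}; your BFS-level pigeonhole sketch is in the right spirit but would need to be made precise (in particular, one must argue that some thin band of BFS levels already carries a homologically nontrivial cycle, not merely a short cycle). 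None of this, however, is content that the present paper supplies or claims to supply.
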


\begin{theorem}
\label{thm:upper_genus2}
For the class of genus $g$ graphs, $\lambda_{2\max} \leq 2+O(\frac{1}{\sqrt{n}}).$
\end{theorem}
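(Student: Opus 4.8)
The plan is to combine the separator bound of Theorem~\ref{thm:separator} with the Gilbert--Hutchinson--Tarjan separator of Theorem~\ref{thm:gilbert}, exactly as announced in the outline, the only subtlety being that Theorem~\ref{thm:separator} requires a $\tfrac12$-balanced separator while Theorem~\ref{thm:gilbert} only delivers a $\tfrac23$-balanced one. As in the other upper-bound proofs, since $\lambda_2$ only grows when edges are added we may take $G$ with the maximum number of edges; in particular $G$ has genus $g$ and $n$ vertices, and $g$ is treated as a constant throughout.

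First I would produce a separator $X$ of size $O(\sqrt{gn})$ whose removal leaves every component with at most $\tfrac n3$ vertices. Apply Theorem~\ref{thm:gilbert} to $G$ to obtain $X_1$ with $|X_1| \le 6\sqrt{gn} + 2\sqrt{2n} + 1$ and all components of $G - X_1$ of size at most $\tfrac{2n}3$. Then repeatedly apply Theorem~\ref{thm:gilbert} to each remaining component of size larger than $\tfrac n3$, noting that such a component still has genus at most $g$ and at most $\tfrac{2n}3$ vertices, so each application costs only $O(\sqrt{gn})$ and shrinks the offending component by a factor $\tfrac23$. At any moment at most two components can exceed $\tfrac n3$ (three would already account for more than $n$ vertices), and since each application multiplies the largest surviving size by $\tfrac23$, a constant number of rounds brings all component sizes below $\tfrac n3$. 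Summing the $O(1)$ rounds, each contributing $O(\sqrt{gn})$, gives a separator $X$ with $|X| = O(\sqrt{gn}) = O(\sqrt n)$ for fixed $g$, and with every component of $G - X$ of size at most $\tfrac n3 \le \tfrac{n-|X|}2$ once $n$ is large.

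The decisive step is to bound $|E_{X,G-X}|$ by $2n + O(1)$ rather than by the $3n + O(1)$ that the genus bound on $G$ alone would give; this is what produces the leading constant $2$ instead of $3$. Here I would use that the graph on vertex set $V$ with edge set $E_{X,G-X}$ is bipartite, with parts $X$ and $V \setminus X$, and has genus at most $g$ since it is a subgraph of $G$. Euler's formula for a simple bipartite graph on a surface of genus $g$ (every face has length at least four) then yields $|E_{X,G-X}| \le 2n - 4 + 4g$.

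Finally, since each component of $G - X$ has at most $\tfrac{n-|X|}2$ vertices, Theorem~\ref{thm:separator} applies and gives
\[
\lambda_2(G) \le \frac{|E_{X,G-X}|}{n - |X|} \le \frac{2n - 4 + 4g}{\,n - O(\sqrt n)\,} = 2 + O\!\left(\frac{1}{\sqrt n}\right),
\]
for fixed $g$, as claimed. The main obstacle is the balancing step: turning the $\tfrac23$-balanced Gilbert--Hutchinson--Tarjan separator into a $\tfrac12$-balanced one without letting the separator size exceed $O(\sqrt n)$; the bipartite edge count, by contrast, is a short Euler-formula computation, and it is precisely that bipartite bound (and not the general genus bound) that is responsible for the constant $2$.
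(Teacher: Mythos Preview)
Your proposal is correct and follows essentially the same approach as the paper: iterate the Gilbert--Hutchinson--Tarjan separator to obtain a $\tfrac12$-balanced separator of size $O(\sqrt{gn})$, bound $|E_{X,G-X}|$ by $2n-4+4g$ via Euler's formula for bipartite graphs of genus~$g$, and apply Theorem~\ref{thm:separator}. Your treatment of the balancing step is slightly more explicit than the paper's (which simply says ``we can apply this theorem again on this component''), but the argument and the key insight---that the bipartite structure of $E_{X,G-X}$ is what yields the leading constant $2$---are identical.
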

\begin{proof}
Let $G=(V,E)$ be a graph of genus $g$ with $n$ vertices. It is well known that $|E| \leq 3n-3\chi$, where $\chi$ is the Euler characteristic of $G$ and $\chi=2-2g$.

Furthermore we claim that, by Theorem~\ref{thm:gilbert}, any graph of genus $g$ has a separator $X$ of size at most $O(\sqrt{g}\sqrt{n})$ such that each connected component of $G-X$ has size at most $\frac{n-|X|}{2}=\frac{n-O(\sqrt{g}\sqrt{n})}{2}$ vertices. This follows from the fact that if a connected component has a greater number of vertices after application of Theorem~\ref{thm:gilbert}, then we can apply this theorem again on this component.

We now consider the bipartite graph with bipartition classes $X$ and~$G-X$. The genus of this bipartite graph is also at most~$g$, and its number of edges $|E_{X,G-X}|$ is at most~$2n+4g-4$ (that of a quadrangulation with $n-2+2g$ faces). Thus, by Theorem~\ref{thm:separator}, we get $\lambda_{2}(G) \leq \frac{|E_{X,G-X}|}{n-|X|}\leq 2+O(\frac{1}{\sqrt{n}}).$
\end{proof}

\subsection{$K_h$-minor-free graphs}
\label{sec:upper_linklessly}

Consider the class of $K_h$-minor-free graphs on $n$ vertices. Biswal et al.~\cite{BLR10} proved that for bounded-degree $K_h$-minor-free graphs, $\lambda_2 \in O\left(\frac{\Delta h^6 \log{h}}{n}\right)$, where $\Delta$ is the maximum degree. Using the maximum number of edges in a $K_h$-minor-free graph, we derive an upper bound on $\lambda_{2\max}$ which does not depend on~$\Delta$.

For $h \leq 9$, a $K_h$-minor-free graph with $n$ vertices that is $(h-2)$-connected has less than $(h-2)n$ edges~\cite{song}. For large values of~$h$, a $K_h$-minor-free graph with $n$ vertices has at most $\alpha nh\sqrt{\log(h)}$ edges~\cite{thomason}, for $\alpha=0.319\ldots + o(1)$. We will show that $\lambda_{2\max}$ is at most this number of edges, divided by~$n$, plus a small error term.

The proof relies on the following separator theorem for non-planar graphs~\cite{alon}.

\begin{theorem}[Alon-Seymour-Thomas~\cite{alon}]
\label{thm:separator2}
Let $G$ be a graph with $n$ vertices and with no $K_h$-minor. Then $G$ has a separator $X$ of order $O(h^{3/2}\sqrt{n})$ such that each connected component of $G-X$ has at most $2n/3$ vertices.
\end{theorem}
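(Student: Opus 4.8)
The statement is the Alon--Seymour--Thomas separator theorem, so the plan is to argue by contraposition: assuming $G$ has no balanced separator of the claimed order, I would exhibit a $K_h$ minor, contradicting the hypothesis. First I would reduce to the case that $G$ is connected, since a separator for each connected component assembles into a separator for all of $G$ with the balance condition preserved. Then I would fix a root vertex and run breadth-first search, producing layers $L_0, L_1, \ldots, L_t$, where $L_i$ is the set of vertices at distance $i$ from the root. The elementary fact that drives everything is that every single layer $L_i$ is itself a separator: deleting it disconnects $\bigcup_{j<i} L_j$ from $\bigcup_{j>i} L_j$.

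The argument then splits according to the sizes of the layers near the weighted center of the layering. In the \emph{easy case}, I would search for a short band of consecutive layers, centered so as to balance the two sides, whose total size is at most the target $O(h^{3/2}\sqrt{n})$; if such a band exists I take it as the separator $X$, and the property that each component of $G-X$ has at most $2n/3$ vertices follows from how the band is centered. So the only way the theorem can fail is if \emph{all} the layers in a wide central window are large.

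This all-large-layers situation is where the real content lies, and it is where I expect the main obstacle. Here I would use the abundance of vertices in the central layers to construct a $K_h$ minor. The subtlety is that consecutive BFS layers are automatically adjacent, so a single monotone chain of contracted layers only yields a \emph{path} minor; forcing a clique requires routing many vertex-disjoint connections between the chosen branch sets through the intervening large layers. The plan is to group the layers hierarchically and grow $h$ connected branch sets simultaneously, in such a way that every \emph{pair} of branch sets can be joined by a disjoint path. Guaranteeing pairwise adjacency of all $\binom{h}{2}$ pairs while keeping the branch sets vertex-disjoint is the technical heart of the proof.

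Finally, the exponent $3/2$ in $h^{3/2}\sqrt{n}$ should emerge from balancing the two competing parameters in this construction: roughly $\sqrt{n}$ vertices are needed per connecting path, there are on the order of $h$ branch sets, and an additional factor of about $\sqrt{h}$ comes from the density of disjoint connections one must simultaneously route to realize a clique rather than a sparser minor. Once the dichotomy is established---a small central band yields the separator directly, while uniformly large layers yield the forbidden $K_h$ minor---the theorem follows.
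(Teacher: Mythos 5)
This theorem is not proved in the paper at all: it is quoted verbatim from Alon, Seymour and Thomas~\cite{alon} and used as a black box, so there is no internal proof to compare against and your proposal has to stand on its own. It does not, because the decisive step is missing. The dichotomy you set up (either some centered band of BFS layers is small, or all layers in a wide central window are large) is fine, and the easy case is handled correctly by the standard median-band argument. But the hard case --- showing that uniformly large central layers force a $K_h$ minor --- is precisely the part you leave as a ``plan'': you say you would ``group the layers hierarchically and grow $h$ connected branch sets simultaneously'' so that all $\binom{h}{2}$ pairs are joined by disjoint paths, and you yourself label this the technical heart. Naming the technical heart is not carrying it out, and it is not a routine verification. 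Large BFS layers give no control whatsoever over how those layers are internally connected, so a single global BFS layering does not yield a clique minor; the known arguments in this spirit (Klein--Plotkin--Rao, Plotkin--Rao--Smith) must recurse inside bands, grow balls of bounded radius around carefully chosen centers, and re-run the dichotomy at every scale, and even then they are delicate. As written, your proof has a hole exactly where the theorem's content lies.

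There is a second, independent problem: even if the minor construction were executed, the BFS route is not known to deliver the stated bound. Ball-growing arguments of this type naturally lose extra factors (logarithmic in $n$, or a worse power of $h$), which is one reason Alon, Seymour and Thomas do not argue this way. Their proof dispenses with BFS layerings entirely: it iteratively grows a family of fewer than $h$ vertex-disjoint, pairwise-adjacent connected subgraphs of controlled size, and when the family can no longer be extended it extracts the separator from the family itself and the neighborhood of an appropriate piece; the exponent $3/2$ comes from balancing the size of each piece against the number of pieces. Your closing heuristic that ``an additional factor of about $\sqrt{h}$ comes from the density of disjoint connections'' is an aspiration, not an argument --- nothing in the sketch produces that exponent. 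So the proposal should be assessed as a proof outline with a genuine gap, not as a proof.
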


We remark that it has recently been shown that $K_h$-minor-free graphs even have a separator of size $O(h\sqrt{n})$~\cite{reed}.

\begin{theorem}
For the class of $K_h$-minor-free graphs,  and for $h\leq 9$, $$\lambda_{2\max} \leq h-2+O\left(\frac{1}{\sqrt{n}}\right)$$
and for large values of $h$
$$\lambda_{2\max} \leq \alpha h\sqrt{\log(h)} + O\left({\frac{ \alpha h^{5/2}\sqrt{\log(h)}}{\sqrt{n}}}\right),$$ where \ $\alpha = 0.319\ldots + o(1)$.
\end{theorem}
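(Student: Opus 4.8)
The argument will run parallel to the bounded-genus case of Theorem~\ref{thm:upper_genus2}, using Theorem~\ref{thm:separator} as the engine and feeding it two ingredients: the Alon--Seymour--Thomas separator (Theorem~\ref{thm:separator2}) in place of the Gilbert--Hutchinson--Tarjan one, and the known extremal edge counts for $K_h$-minor-free graphs in place of the genus edge bound. Concretely, I would take an arbitrary $K_h$-minor-free graph $G$ on $n$ vertices, construct a balanced separator $X$ of order $O(h^{3/2}\sqrt n)$, observe that the bipartite graph carrying $E_{X,G-X}$ is a subgraph of $G$ and hence $K_h$-minor-free, and then apply Theorem~\ref{thm:separator} to get
$$\lambda_2(G) \leq \frac{|E_{X,G-X}|}{n-|X|} \leq \frac{|E(G)|}{n-|X|}.$$
Since every $K_h$-minor-free graph on $n$ vertices has at most $(h-2)n$ edges for $4\leq h\leq 9$ (by~\cite{song}, the extremal graphs being $(h-2)$-connected) and at most $\alpha n h\sqrt{\log h}$ edges for large $h$ (by~\cite{thomason}), the numerator is controlled in both regimes.

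The one point requiring care is producing the separator with the right balance. Theorem~\ref{thm:separator2} only guarantees components of size at most $2n/3$, whereas Theorem~\ref{thm:separator} demands components of size at most $\frac{n-|X|}{2}$. I would close this gap by iterating exactly as in Theorem~\ref{thm:upper_genus2}: at each round apply Theorem~\ref{thm:separator2} again only to those components whose size still exceeds, say, $n/3$. At most two components can exceed $n/3$ at any stage, and each application shrinks the largest piece by the factor $2/3$, so a constant number of rounds (independent of $h$) drives every component below $n/3 < \frac{n-|X|}{2}$ for $n$ large. The accumulated $X$ is then a union of boundedly many separators, each of order $O(h^{3/2}\sqrt n)$, whence $|X|=O(h^{3/2}\sqrt n)$ with an absolute implied constant.

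It then remains to expand the bound. For $h\leq 9$, $|X|=O(\sqrt n)$, so
$$\lambda_2(G) \leq \frac{(h-2)n}{n-O(\sqrt n)} = (h-2)\left(1+O\left(\tfrac{1}{\sqrt n}\right)\right) = h-2+O\left(\tfrac{1}{\sqrt n}\right).$$
For large $h$, using $\frac{1}{1-O(h^{3/2}/\sqrt n)} = 1+O(h^{3/2}/\sqrt n)$,
$$\lambda_2(G) \leq \frac{\alpha n h\sqrt{\log h}}{n-O(h^{3/2}\sqrt n)} = \alpha h\sqrt{\log h}+O\left(\frac{\alpha h^{5/2}\sqrt{\log h}}{\sqrt n}\right),$$
as claimed; note that the extra factor $h^{3/2}$ in the error term is exactly the order of the separator.

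I expect the difficulty to be bookkeeping rather than conceptual, this being the most routine of the upper bounds in Section~\ref{sec:upper}. The two things to verify carefully are that the separator iteration simultaneously achieves the balance $\frac{n-|X|}{2}$ and the size bound $O(h^{3/2}\sqrt n)$ with the $h$-dependence propagated correctly into the error term, and that the edge bound of~\cite{song}, stated there for $(h-2)$-connected graphs, may legitimately be used as a universal upper bound on $|E(G)|$; the latter holds because the extremal $K_h$-minor-free graphs attaining $(h-2)n$ edges are themselves $(h-2)$-connected, so the bound is inherited by every graph in the class.
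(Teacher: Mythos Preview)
Your approach matches the paper's almost exactly: apply the Alon--Seymour--Thomas separator, iterate once or twice to bring every component below $\tfrac{n-|X|}{2}$, and feed the resulting $|X|=O(h^{3/2}\sqrt n)$ and the extremal edge count into Theorem~\ref{thm:separator}. The paper in fact observes that after the first application at most two components can exceed $\tfrac{n-|X'|}{2}$ (actually at most one, by a counting argument), so a single further round suffices rather than a geometric iteration---but your version works too.

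The one substantive difference is your treatment of the $h\le 9$ edge bound. The paper does \emph{not} claim that $(h-2)n$ bounds $|E(G)|$ for every $K_h$-minor-free graph; rather, it first disposes of graphs that are not $(h-2)$-connected by invoking Fiedler's inequality $\lambda_2(G)\le\kappa(G)\le h-3$ directly, and only then applies the edge bound from~\cite{song} to the remaining $(h-2)$-connected case. Your justification (``the extremal $K_h$-minor-free graphs attaining $(h-2)n$ edges are themselves $(h-2)$-connected, so the bound is inherited by every graph in the class'') is circular as stated: knowing that $(h-2)$-connected graphs have at most $(h-2)n$ edges, together with the claim that the extremal graphs are $(h-2)$-connected, does not by itself bound the edges of a non-$(h-2)$-connected graph. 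It happens to be true that the extremal function for $K_h$-minors with $h\le 9$ is $(h-2)n-\binom{h-1}{2}$ for all graphs in the class (Mader for $h\le 7$, J{\o}rgensen for $h=8$, Song--Thomas for $h=9$), so your conclusion is correct---but you should either cite that stronger fact directly or, more simply, adopt the paper's connectivity split, which sidesteps the issue entirely.
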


\begin{proof}
Let $G=(V,E)$ be a $K_h$-minor-free graph with $n$ vertices.
By Theorem~\ref{thm:separator2}, $G$ has a separator $X'$ of size $O(h^{3/2}\sqrt{n})$ such that each connected component of $G-X'$ has at most $2n/3$ vertices. At most two connected components of $G-X'$ have more than $\frac{n-O(h^{3/2}\sqrt{n})}{2}$ vertices. We can again apply Theorem~\ref{thm:separator2} to these two components to obtain a separator $X$ of $G$ of size $O(h^{3/2}\sqrt{n})$ such that each connected component of $G-X$ has at most $\frac{n-|X|}{2}$ vertices. We now use Theorem~\ref{thm:separator}.

For $h\leq 9$ we can assume that $G$ is $(h-2)$-connected, as otherwise $\lambda_2(G)$ is at most $h-2$~\cite{fiedler}. Then $G$ has at most $(h-2)n$ edges. We obtain
$$\lambda_2(G) \leq  \frac{(h-2)n} {n-O(h^{3/2}\sqrt{n})} = h-2 + O\left(\frac{1}{\sqrt{n}}\right).$$
For large values of $h$ we obtain
$$\lambda_2(G) \leq  {\frac{\alpha hn\sqrt{\log(h)}} {n-O(h^{3/2}\sqrt{n})}} = \alpha h\sqrt{\log(h)} + O\left({\frac{ \alpha h^{5/2}\sqrt{\log(h)}}{\sqrt{n}}}\right).$$
\end{proof}

\section{Lower bounds on $\lambda_{2\max}$ }
\label{sec:lower_planar}



\begin{figure}[htb]
\begin{center}
\includegraphics[width=\textwidth]{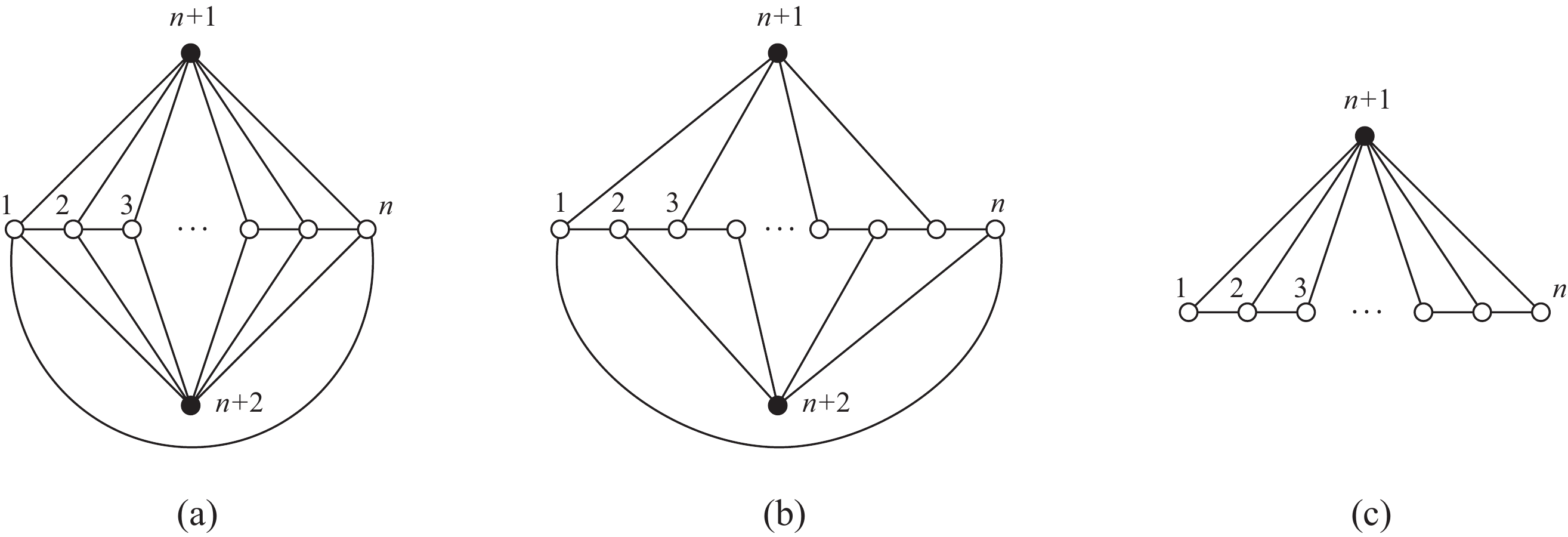}
\caption{(a) The doublewheel graph $C_n\ast 2K_1$. (b) Quadrangulation. (c) Outerplanar graph.}
\label{fig:lower}
\end{center}
\end{figure}

\subsection{General planar graphs and graphs of bounded genus}

The lower bound example on $\lambda_{2\max}$ for the class of planar graphs is
the graph $C_n\ast 2K_1$, the join of a cycle and two isolated vertices, see Figure~\ref{fig:lower}~(a).

\begin{lemma}[see also~\cite{freitas}]
\label{lemma:doublewheel}
Assume $n\ge 4$ even. Let $G=C_n\ast 2K_1$
be the join of an $n$-cycle and two isolated vertices.
 Then,
$ \lambda_2(G)=4-2\cos(2 \pi/n)=2+\Theta(1/n^2).$
\end{lemma}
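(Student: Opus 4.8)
The plan is to compute the Laplacian spectrum of $G = C_n \ast 2K_1$ explicitly, exploiting its high degree of symmetry. Label the cycle vertices $u_0, u_1, \dots, u_{n-1}$ and the two apex vertices $w_1, w_2$. Each cycle vertex has degree $4$ (two cycle neighbors plus both apices), each apex has degree $n$, and the two apices are \emph{not} adjacent (the join is with $2K_1$, not $K_2$). First I would write down the Laplacian $L(G)$ in block form relative to the splitting $\{u_0,\dots,u_{n-1}\} \cup \{w_1,w_2\}$. The cycle block is $4I_n - A(C_n)$, where $A(C_n)$ is the adjacency matrix of the cycle; the apex-apex block is $\mathrm{diag}(n,n)$; and the coupling block is $-J$, an all-ones rectangular matrix, since every apex is joined to every cycle vertex.

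The key step is to diagonalize using the circulant structure on the cycle part. The eigenvectors of $A(C_n)$ are the Fourier modes $f_k = (1, \omega^k, \omega^{2k}, \dots)$ with $\omega = e^{2\pi i/n}$, having eigenvalues $2\cos(2\pi k/n)$ for $k = 0, 1, \dots, n-1$. I would observe that for every $k \neq 0$ the vector supported on the cycle by $f_k$ and zero on the two apices is already an eigenvector of the full $L(G)$: the coupling block $-J$ annihilates $f_k$ because $\sum_j \omega^{jk} = 0$ when $k \neq 0$. This immediately yields the $n-1$ eigenvalues $4 - 2\cos(2\pi k/n)$ for $k = 1, \dots, n-1$. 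The remaining three eigenvalues come from the three-dimensional invariant subspace spanned by the constant cycle mode $f_0 = (1,\dots,1,0,0)$ and the two apex indicator vectors; restricting $L(G)$ there gives a small $3\times 3$ matrix whose eigenvalues I would compute directly. By symmetry between the two apices, the antisymmetric apex combination $w_1 - w_2$ is an eigenvector with eigenvalue $n$, and the remaining $2\times 2$ block (spanned by $f_0$ and $w_1+w_2$) yields the eigenvalue $0$ (the all-ones vector) together with one large eigenvalue of order $n$.

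Once all $n+2$ eigenvalues are in hand, I would identify the smallest nonzero one. The three exceptional eigenvalues are $0$, $n$, and the large $O(n)$ value, so the Fiedler value is the smallest among the family $\{4 - 2\cos(2\pi k/n) : k = 1, \dots, n-1\}$. This family is minimized when $\cos(2\pi k/n)$ is largest, i.e. at $k = 1$ (and $k = n-1$, which gives the same value), yielding $\lambda_2(G) = 4 - 2\cos(2\pi/n)$. The Taylor expansion $\cos(2\pi/n) = 1 - \tfrac{1}{2}(2\pi/n)^2 + O(n^{-4})$ then gives $\lambda_2(G) = 2 + \Theta(1/n^2)$, matching the claimed asymptotics.

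The main obstacle I anticipate is being careful about the exceptional three-dimensional subspace rather than the routine circulant modes. In particular I must verify that none of the three exceptional eigenvalues accidentally dips below $4 - 2\cos(2\pi/n) \approx 2$; this is where the hypothesis matters, since the two nonzero exceptional eigenvalues are both of order $n$ and hence far larger than the Fiedler value for $n \ge 4$. I would also double-check the parity assumption $n$ even: it guarantees that $k = n/2$ gives the cycle eigenvalue $4 - 2\cos(\pi) = 6$ and, more importantly, ensures the construction of the lower-bound example is clean, though the value of $\lambda_2$ itself comes from the $k=1$ mode regardless of parity. A final consistency check is that $4 - 2\cos(2\pi/n) < 2 + O(1/n)$, confirming this example indeed realizes the claimed lower bound $2 + \Theta(1/n^2)$ for the planar upper bound $2 + O(1/n)$.
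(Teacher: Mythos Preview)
Your proposal is correct and complete. The paper, however, takes a shorter route: it quotes the known Laplacian spectra of $C_n$ and of $2K_1$ and then applies a general formula for the Laplacian polynomial of a join (Corollary~3.7 in Mohar's survey~\cite{mohar91}), which immediately produces the factored characteristic polynomial
\[
\mu(G,x)=\frac{x(x-n-2)}{(x-n)(x-2)}\,(x-n)^2 (x-2)(x-6)\prod_{k=1}^{\frac{n}{2}-1}(x-4+2\cos(2\pi k/n))^2,
\]
from which the full spectrum $\{0,6,n,n+2\}\cup\{4-2\cos(2\pi k/n):1\le k\le n/2-1\}$ is read off in one line. Your approach recovers exactly the same list by a direct eigenvector computation: the Fourier modes $f_k$ on the cycle (extended by zero on the apices) give the $n-1$ eigenvalues $4-2\cos(2\pi k/n)$, and your $3\times 3$ residual block indeed has eigenvalues $0$, $n$, and $n+2$ (the ``large $O(n)$ value'' you left unspecified is exactly $n+2$, since the $2\times 2$ symmetric block has trace $n+2$ and determinant $0$). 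Your method is more self-contained and avoids citing an external corollary, at the cost of a few more lines of linear algebra; the paper's method is faster but relies on knowing the join formula. Either way the identification of $\lambda_2=4-2\cos(2\pi/n)$ and the Taylor expansion to $2+\Theta(1/n^2)$ are identical.
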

\begin{proof}
It is well known (see e.g.~\cite{mohar91}) that
the eigenvalues of $C_n$ are $0$ (with multiplicity $1$),
$2-2 \cos (2 \pi k/n)$ for $k=1,\ldots,\frac{n}{2}$ (with multiplicity $2$ for $k=1,\ldots,\frac{n}{2}-1$, and multiplicity $1$ for $k=\frac{n}{2}$).
The only eigenvalue of $2K_1$ is $0$ (with multiplicity $2$).
By Corollary~3.7 in~\cite{mohar91} we get that the Laplacian polynomial of~$G$ is
\begin{equation}\label{eq:triangulations}
\mu(G,x)=\frac{x(x-n-2)}{(x-n)(x-2)}	(x-n)^2 (x-2)(x-6)\prod_{k=1}^{\frac{n}{2}-1}(x-4+2\cos(2 \pi k/n))^2.
\end{equation}
Hence, the eigenvalues of $G$ are $0,6,n,n+2$ (all with multiplicity $1$), and $4- 2\cos(2 \pi k/n)$ for $k=1,\ldots,\frac{n}{2}-1$ (all with multiplicity $2$).
\end{proof}

This same family of graphs provides also the lower bound example for graphs of bounded genus.

\subsection{Bipartite planar graphs}

\begin{lemma}
Assume $n\ge 4$, even. Let $G$ be the quadrangulation on $n+2$ vertices
obtained from the $n$-cycle $C_n$ by connecting $\frac{n}{2}$ independent vertices to a new vertex
and the remaining $\frac{n}{2}$ (also independent) vertices to another new vertex,
see Figure~\ref{fig:lower}~(b). Then,
$ \lambda_2(G)=3-2\cos(2 \pi/n)=1+\Theta(1/n^2).$
\end{lemma}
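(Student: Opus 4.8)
The plan is to compute the entire Laplacian spectrum of $G$ explicitly, exactly in the spirit of Lemma~\ref{lemma:doublewheel}, and then read off the second smallest eigenvalue. Label the cycle vertices $v_0,\dots,v_{n-1}$, let $u$ be the new vertex joined to the even vertices $v_0,v_2,\dots$ and $w$ the one joined to the odd vertices; every cycle vertex then has degree $3$, while $\deg(u)=\deg(w)=\tfrac n2$. The structural feature I would exploit is the automorphism that rotates the cycle by two positions while fixing $u$ and $w$: it generates a cyclic group of order $\tfrac n2$ with which $L(G)$ commutes, so $L(G)$ block-diagonalizes into Fourier sectors. In parallel, the involution $\sigma$ that swaps the two colour classes together with $u\leftrightarrow w$ also commutes with $L(G)$.

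First I would produce the large family of eigenvectors supported on the cycle. For each $k$, set $f(v_j)=\cos(2\pi kj/n)$ (or the sine variant) and $f(u)=f(w)=0$. Writing $L(G)f=\lambda f$ at a cycle vertex gives the shifted cycle relation $3f(v_j)-f(v_{j-1})-f(v_{j+1})=\lambda f(v_j)$, forcing $\lambda=3-2\cos(2\pi k/n)$. The equations at $u$ and $w$ become $\sum_m f(v_{2m})=0$ and $\sum_m f(v_{2m+1})=0$, and these partial sums vanish precisely when $k\notin\{0,\tfrac n2\}$, since $\sum_m\cos(4\pi km/n)=0$ for such $k$. Hence for every $k\in\{1,\dots,\tfrac n2-1\}$ we obtain the eigenvalue $3-2\cos(2\pi k/n)$ with multiplicity two, the smallest being $3-2\cos(2\pi/n)$ at $k=1$.

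It then remains to locate the four eigenvalues living in the rotation-invariant sector, i.e.\ on functions that are constant (value $a$) on the even vertices, constant (value $b$) on the odd vertices, and equal to $p,q$ on $u,w$. On this four-dimensional space $L(G)$ acts by $a\mapsto 3a-2b-p$, $b\mapsto 3b-2a-q$, $p\mapsto\tfrac n2(p-a)$, $q\mapsto\tfrac n2(q-b)$. Decomposing into the $\pm1$-eigenspaces of $\sigma$, that is the symmetric part $a=b,\ p=q$ and the antisymmetric part $a=-b,\ p=-q$, splits this into two $2\times2$ blocks. The symmetric block has eigenvalues $0$ (the all-ones vector) and $1+\tfrac n2$, while the antisymmetric block contributes the two roots of
$$\lambda^2-\bigl(5+\tfrac n2\bigr)\lambda+2n=0.$$
Assembling everything gives a product formula for the Laplacian polynomial analogous to~\eqref{eq:triangulations}, with the stated eigenvalues.

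The only genuinely delicate step is to verify that none of these four eigenvalues slips below the cycle value $x_0:=3-2\cos(2\pi/n)$, so that $x_0$ really is $\lambda_2$. The values $1+\tfrac n2$ and the larger antisymmetric root are clearly large; the danger is the smaller antisymmetric root $\lambda_-$. Since $x_0$ lies to the left of the vertex $\tfrac12(5+\tfrac n2)$ of the upward parabola $g(\lambda)=\lambda^2-(5+\tfrac n2)\lambda+2n$, the inequality $\lambda_-\ge x_0$ is equivalent to $g(x_0)\ge0$. A short computation reduces this to $\tfrac n2+(n-2)\cos(2\pi/n)+4\cos^2(2\pi/n)-6\ge0$, which holds for all $n\ge 6$ (with equality exactly at $n=6$, where $\lambda_-$ coincides with $x_0$); this is where largeness of $n$ genuinely enters, and I would expect this sign check to be the main obstacle. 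Finally, $3-2\cos(2\pi/n)=1+\Theta(1/n^2)$ follows from $\cos(2\pi/n)=1-2\pi^2/n^2+O(1/n^4)$.
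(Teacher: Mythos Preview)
Your approach is the same as the paper's: exhibit the cycle-supported eigenvectors with eigenvalues $3-2\cos(2\pi k/n)$, then resolve the remaining four eigenvalues on the subspace of functions constant on each colour class, splitting it via the colour-swap involution into a symmetric $2\times2$ block (eigenvalues $0$ and $1+\tfrac n2$) and an antisymmetric one. The paper simply writes down these eigenvectors by hand rather than invoking the rotation symmetry, and dispatches the antisymmetric pair by asserting both eigenvalues exceed $\tfrac52$. Your explicit comparison of $\lambda_-$ with $x_0$ is actually the more careful argument: the paper's discriminant $n^2/4-7n+25$ is an arithmetic slip for your $n^2/4-3n+25$, and with the corrected value the claimed identity $\lambda_2(G)=3-2\cos(2\pi/n)$ in fact fails at $n=4$ (the graph is then $K_{3,3}$ minus an edge, with $\lambda_2=(7-\sqrt{17})/2\approx1.44<3$). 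So your restriction to $n\ge6$ is the correct hypothesis; the asymptotic $1+\Theta(1/n^2)$, which is all that is used later, is of course unaffected.
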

\begin{proof}
We explicitly give all eigenvectors and eigenvalues.
Assume that the vertices in the cycle are labeled from $1$ to $n$, in cyclic order.
Assume also that the vertex of degree $n/2$ adjacent only to vertices of odd label is labeled $n+1$,
and the vertex of degree $n/2$ adjacent only to vertices with even label is labeled $n+2$.

By direct calculation one can check that
the vectors
$$(x_k)_i=\sin\left(\frac{2 \pi i k}{n}\right),\ \mbox{for }i\in\{1,\ldots,n\}\quad
\mbox{and}
\quad (x_k)_i=0,\ \mbox{for } i\in\{n+1,n+2\},$$
as well as
$$(y_k)_i=\cos\left(\frac{2 \pi i k}{n}\right),\ \mbox{for } i\in\{1,\ldots,n\}\quad
\mbox{and}
\quad (y_k)_i=0,\ \mbox{for } i\in\{n+1,n+2\},$$
are eigenvectors associated to the eigenvalue $3-2\cos\frac{2 \pi k}{n}$, for any $k\in\{1,\ldots,\frac{n}{2}-1\}$.

The other eigenvectors and corresponding eigenvalues are as follows.
The all-ones vector corresponds to $\lambda_1=0$.
The vector $v$ with
$$v_i=1,\ \mbox{for } i\in\{1,\ldots,n\}\quad
\mbox{and}
\quad v_{n+1}=v_{n+2}=-\frac{n}{2}$$
is an eigenvector with corresponding eigenvalue $\frac{n}{2}+1$.
The vector $u$ defined as
$$u_i=1,\ \mbox{for } i\ \mbox{odd and } i \leq n,
\quad
u_i=-1,\ \mbox{for } i\ \mbox{even and } i \leq n,$$
$$u_{n+1}=-u_{n+2}=\frac{5}{2}-\frac{n}{4}+\frac12\sqrt{n^2/4-7n+25}$$
is an eigenvector with corresponding eigenvalue $5-u_{n+1} > \frac{5}{2}$.
Finally the vector $w$ defined as
$$w_i=u_i,\ \mbox{for } i \leq n,
\quad
w_{n+1}=-w_{n+2}=\frac{5}{2}-\frac{n}{4}-\frac12\sqrt{n^2/4-7n+25}$$
is an eigenvector with corresponding  eigenvalue $5-w_{n+1} > \frac{5}{2}$.

Therefore, $\lambda_2(G) = 3-2 \cos \left(\frac{2 \pi}{n}\right)=1+\Theta(1/n^2).$
\end{proof}

\subsection{Outerplanar graphs}

\begin{lemma}
\label{lemma:fan}
Assume $n\ge 2$. Let $G=P_n\ast K_1$ be the fan graph, obtained by the join of the path $P_n$ with $n$ vertices and the isolated vertex $K_1$, see Figure~\ref{fig:lower}~(c). Then,
$ \lambda_2(G)=3-2\cos( \pi/n)=1+\Theta(1/n^2).$
\end{lemma}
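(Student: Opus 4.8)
The plan is to compute the full Laplacian spectrum of the fan graph $G=P_n\ast K_1$ explicitly, in the same spirit as the previous two lemmas, and then read off $\lambda_2$. The fan graph places one apex vertex (the $K_1$) adjacent to all $n$ path vertices, together with the path edges themselves. The guiding intuition is that the path $P_n$ has Laplacian eigenvalues $2-2\cos(\pi k/n)$ for $k=0,\ldots,n-1$, with eigenvectors built from cosines $\cos(\pi k(2i-1)/(2n))$ (the discrete cosine basis appropriate to a path, as opposed to the full cosine/sine basis for a cycle). Joining the apex vertex adds $1$ to each path vertex's degree and contributes the apex-to-path edges, so I expect most eigenvalues to shift from the path value $2-2\cos(\pi k/n)$ up by $1$, yielding the claimed $3-2\cos(\pi k/n)$.

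First I would fix a labeling: call the path vertices $1,\ldots,n$ in order, and the apex vertex $n+1$. I would then exhibit eigenvectors that vanish on the apex vertex. For a vector $x$ with $x_{n+1}=0$, the eigenvalue equation at the apex reads $\sum_{i=1}^n x_i = 0$ (the apex row of $L$ forces the coordinates over the path to sum to zero for the eigenvalue to be nonzero), and the equations at the path vertices reduce to the path-Laplacian equation with the degree inflated by $1$ from the extra edge to the apex. Concretely I would propose the vectors $(x_k)_i = \cos\bigl(\frac{\pi k (2i-1)}{2n}\bigr)$ for $i\in\{1,\ldots,n\}$ and $(x_k)_{n+1}=0$, for $k\in\{1,\ldots,n-1\}$, and verify by direct substitution that each is an eigenvector with eigenvalue $3-2\cos(\pi k/n)$. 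The key algebraic check is the standard cosine identity $\cos(\theta(i{-}1)) + \cos(\theta(i{+}1)) = 2\cos\theta\cos(\theta i)$ handling the interior path vertices, together with the Neumann-type boundary behavior at vertices $1$ and $n$ that the half-integer phase $\tfrac{2i-1}{2}$ is designed to produce; one must also confirm $\sum_{i=1}^n \cos\bigl(\frac{\pi k(2i-1)}{2n}\bigr)=0$ for $1\le k\le n-1$ so that these vectors are genuine eigenvectors orthogonal to the apex direction.

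That accounts for $n-1$ eigenvalues; the remaining two eigenvalues come from vectors constant on the path coordinates (by symmetry, the complementary invariant subspace). These are $\lambda_1=0$ with the all-ones vector, and one further eigenvalue coming from a vector of the form $v_i = 1$ on the path and $v_{n+1}=t$ for a suitable constant $t\ne 1$; solving the $2\times 2$ reduced eigenvalue problem on this two-dimensional symmetric subspace gives the last eigenvalue, which I expect to be of order $n$ (analogous to the $\frac{n}{2}+1$ appearing in the quadrangulation lemma) and hence irrelevant to $\lambda_2$. Having listed all $n+1$ eigenvalues, I would observe that among $\{3-2\cos(\pi k/n): 1\le k\le n-1\}$ the smallest is attained at $k=1$, giving $3-2\cos(\pi/n)$, and that this is smaller than both $0$'s successor candidates and the large eigenvalue, so $\lambda_2(G)=3-2\cos(\pi/n)=1+\Theta(1/n^2)$ via the Taylor expansion $\cos(\pi/n)=1-\frac{\pi^2}{2n^2}+O(n^{-4})$.

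The main obstacle I anticipate is getting the boundary conditions at the two path endpoints exactly right so that the proposed cosine eigenvectors satisfy the eigenequation at vertices $1$ and $n$ without error terms. Unlike the cycle case in the earlier lemmas, where translation symmetry makes every vertex interior and the verification is uniform, here vertices $1$ and $n$ have path-degree $1$ rather than $2$, so the relevant eigenvectors are the discrete-cosine (DCT) modes rather than plain sines/cosines, and the half-integer shift in the argument is precisely what makes the endpoint equations close up. I would therefore treat the endpoint equations as the crux of the verification and check them separately; once they hold, the interior equations and the apex equation follow routinely, and the spectrum is complete by a dimension count.
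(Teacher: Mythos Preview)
Your proposal is correct and complete: the DCT-type cosine vectors with half-integer phase are indeed the path-Laplacian eigenvectors, the identity $\sum_{i=1}^n \cos\bigl(\tfrac{\pi k(2i-1)}{2n}\bigr)=0$ for $1\le k\le n-1$ holds, and the two-dimensional reduced problem on vectors constant along the path yields eigenvalues $0$ and $n+1$ (take $t=1$ and $t=-n$, respectively), so the full spectrum is $\{0,\;n+1\}\cup\{3-2\cos(\pi k/n):1\le k\le n-1\}$ as claimed.

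The paper takes a shorter but less self-contained route: rather than exhibiting eigenvectors, it invokes a general formula for the Laplacian characteristic polynomial of a join (Corollary~3.7 in Mohar's survey), which converts the known spectra of $P_n$ and $K_1$ directly into the spectrum of $P_n\ast K_1$. Your approach mirrors instead what the paper does by hand for the quadrangulation lemma just above, where no such join formula applies. The trade-off is that the paper's argument is a two-line citation once the join formula is available, while yours is elementary and transparent but requires carrying out the endpoint and apex checks explicitly; both arrive at exactly the same eigenvalue list.
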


\begin{proof}
It is well known (see, e.g.~\cite{brouwer}) that
the eigenvalues of $P_n$ are $2- 2\cos(\frac{\pi k}{n})$, for $k\in\{0,..,n-1\}$.
The only eigenvalue of $K_1$ is $0$.
Then, by Corollary~3.7 in~\cite{mohar91},
\begin{equation}\label{eq:outerplanar}
\mu(G,x)=\frac{x(x-n-1)}{(x-n)(x-1)}	(x-n)\prod_{k=0}^{n-1}(x-3+2\cos({ \pi k}/{n})).
\end{equation}
Therefore the eigenvalues of $G$ are $0, 3-2\cos(\frac{ \pi k}{n})$, for $k\in\{1,\cdots, n-1\}$, and $n+1$. Thus $\lambda_2(G)=3-2\cos\left(\frac{ \pi }{n}\right) = 1+\Theta(\frac{1}{n^2}).$
\end{proof}

\subsection{$K_h$-minor-free graphs}

Let $n\geq 2h-4.$ The complete bipartite graph $K_{{h-2},{n-h+2}}$ is $K_h$-minor-free and $\lambda_2(K_{{h-2},{n-h+2}}) = h-2$~\cite{abreu}.

\section{Concluding remarks}

We have shown bounds on the maximum Fiedler value $\lambda_{2\max}$, detailed in Table~\ref{tab:results}, for several classes of graphs. Two different techniques have been presented, both based on the Embedding Lemma~\ref{lem:embedding} together with a second tool: On one hand, the embedding of Spielman and Teng for planar graphs. On the other hand, an appropriate separator.
In either method, the quality of the bound on the Fiedler value depends on the estimation of the number of edges connecting vertices of high degree with others of low degree, in the first case, or the number of edges incident to a separator, in the second case.

We expect that a similar approach works for the class of linklessly embeddable graphs, where it would be interesting to see $\lambda_{2\max}=3+o(1)$.

Needless to say, it would be interesting to close the gap between the lower and upper bounds we obtained.
We conjecture that the constructions presented provide, indeed, the correct values for the corresponding graph classes. In particular, for planar graphs and $n=6$, the graph $K_{2,2,2}$ which maximizes $\lambda_{2\max}$ for $n=6$ is a special case of the lower bound example presented here. Analogously, for bipartite planar graphs with minimum vertex degree~$3$, the maximizing example for $n=8$ is the cube, which has $\lambda_2=2$, and also belongs to the presented family of graphs.

During the revision phase a new paper~\cite{NO} appeared. There the authors reprove our results for graphs of genus $g$ and for $K_h$-minor-free graphs, for which they extend the upper bound given here for $h\leq 9$ to all values of $h$. Their results come from a general statement using shallow topological minors and they also use some of our results from~\cite{eurocomb}. For planar graphs, however, the upper bound they obtain is weaker than the one obtained in this paper.

\section{Acknowledgments}

L.~Barri\`ere was partially supported by Ministerio de Ciencia e Innovaci\'{o}n, Spain, and the European Regional Development Fund under project MTM2011-28800-C02-01 and by the Catalan Research Council under project 2009SGR1387. C.~Huemer was partially supported by projects MTM2009-07242, Gen. Cat. DGR 2009SGR1040, and ESF EUROCORES programme EuroGIGA, CRP ComPoSe: MICINN Project EUI-EURC-2011-4306. D.~Mitsche gratefully acknowledges support from NSERC and MPrime. D.~Orden was partially supported by MICINN grants MTM2008-04699-C03-02 and MTM2011-22792, and by the ESF EUROCORES programme EuroGIGA, CRP ComPoSe, under grant EUI-EURC-2011-4306.

\end{document}